\begin{document}

\title{Generalized Polyhedral Convex Optimization Problems}


\author{Nguyen Ngoc Luan \and Jen-Chih Yao}


\institute{Nguyen Ngoc Luan \at Department of Mathematics and Informatics, Hanoi National University of Education, 136 Xuan Thuy, Hanoi, Vietnam. This author was supported by Hanoi National University of Education.\\
\email{luannn@hnue.edu.vn}           
\and Jen-Chih Yao  \at Center for General Education, China Medical University, 	Taichung 40402, Taiwan. This author was partially supported by the Grant MOST 105-2115-M-039-002-MY3.\\
\email{yaojc@mail.cmu.edu.tw}}

\date{Received: date / Accepted: date}

\maketitle

\begin{abstract}
Generalized polyhedral convex optimization problems in locally convex Hausdorff topological vector spaces are studied systematically in this paper. We establish solution existence theorems, necessary and sufficient optimality conditions, weak and strong duality theorems. In particular, we show that the dual problem has the same structure as the primal problem, and the strong duality relation holds under three different sets of conditions. 

\keywords{Locally convex Hausdorff topological vector space \and Generalized polyhedral convex optimization problem \and Solution existence \and Optimality condition \and Duality}

\subclass{90C25 \and 90C46 \and 90C48 \and 49N15}

\end{abstract}

\section{Introduction}

A \textit{polyhedral convex set} in a finite-dimensional Euclidean space is the intersection of a finite number of closed half-spaces; see, e.g., \cite[Section~19]{Rockafellar_1970}. Functions with polyhedral convex epigraphs, called \textit{polyhedral convex functions}, were investigated long time ago by Rockafellar \cite{Rockafellar_1970}. Later, Rockafellar and Wets \cite[p.~68]{Rockafellar_Wets_1998} showed that a polyhedral convex function can be characterized as the maximum of a finite family of affine functions over a certain polyhedral convex set. A minimization problem is said to be a \textit{polyhedral convex optimization problem} if the objective function is polyhedral convex and the constraint set is also polyhedral convex. The concepts of polyhedral convex function and polyhedral convex optimization problem have attracted much attention from researchers (see Rockafellar and Wets \cite{Rockafellar_Wets_1998}, Bertsekas, Ned{\'{i}}c, and Ozdaglar \cite{Bertsekas_et_al_2003}, Boyd and Vandenberghe \cite{Boyd_Vandenberghe_2004}, Bertsekas \cite{Bertsekas_2009,Bertsekas_2015}, and the references therein).

\medskip
The definition of generalized polyhedral convex set was proposed by Bonnans and Shapiro \cite[Definition~2.195]{Bonnans_Shapiro_2000}. A subset of a locally convex Hausdorff topological vector space is said to be a \textit{generalized polyhedral convex set} (gpcs) if it is the intersection of finitely many closed half-spaces and a closed affine subspace of that topological vector space. When the affine subspace can be chosen as the whole space, the generalized polyhedral convex set is called a \textit{polyhedral convex set} (pcs), or a convex polyhedron. Clearly, in a finite-dimensional space, a subset is generalized polyhedral convex if and only if it is polyhedral convex. We observe that in any infinite-dimensional space, every nonempty polyhedral convex set is unbounded (see \cite[Lemma~2.12]{Luan_Yao_Yen_2016} for details). Hence, the notion of generalized polyhedral convex set appears naturally in the case where the spaces under consideration are infinite-dimensional. The theories of generalized linear programming and quadratic programming in \cite[Sections~2.5.7 and 3.4.3]{Bonnans_Shapiro_2000} are based on the concept of generalized polyhedral convex set. In a Banach space setting, various applications of gpcs can be found in the papers by Ban, Mordukhovich and Song~\cite{Ban_Mordukhovich_Song_2011},  Gfrerer~\cite{Gfrerer_2013_SIOPT,Gfrerer_2014_SVA}, Ban and Song~\cite{Ban_Song_2016}.

\medskip
In a locally convex Hausdorff topological vector space setting, by using a representation formula for generalized polyhedral convex sets, Luan and Yen~\cite{Luan_Yen_2015} have obtained solution existence theorems for generalized linear programming problems, a scalarization formula for the weakly efficient solution set of a generalized linear vector optimization problem, and proved that the latter is the union of finitely many generalized polyhedral convex sets. In \cite{Luan_2016}, where the relative interior of the dual cone of a polyhedral convex cone is described, it is proved that the corresponding efficient solution set is the union of finitely many generalized polyhedral convex sets. Moreover, it is shown that both solution sets of a generalized linear vector optimization problem are connected by line segments. This result extends a classical theorem due to Arrow, Barankin, and Blackwell (see, e.g., \cite{ABB_1953,Luc_1989,Luc_2016}).

\medskip
The recent paper of Luan, Yao, and Yen \cite{Luan_Yao_Yen_2016} can be seen as a comprehensive study on generalized polyhedral convex sets, generalized polyhedral convex functions on locally convex Hausdorff topological vector spaces, and the related constructions such as sum of sets, sum of functions, directional derivative, infimal convolution, normal cone, conjugate function, subdifferential. Among other things, the authors have showed that, under a mild condition, a generalized polyhedral convex set can be characterized by the finiteness of the number of its faces.  

\medskip
It is well known that any infinite-dimensional normed space equipped with the {\it weak topology} is not metrizable, but it is a locally convex Hausdorff topological vector space. Similarly, the dual space of any infinite-dimensional normed space equipped with the {\it weak$^*$-topology} is not metrizable, but it is a locally convex Hausdorff topological vector space. The just mentioned two fundamental models in functional analysis are the most typical examples of locally convex Hausdorff topological vector space, whose topologies cannot be given by norms. 

\medskip
The aim of the present paper is to study the concept of generalized polyhedral convex optimization problems in locally convex Hausdorff topological vector spaces. Our investigation is based on the above-mentioned papers of Luan and Yen~\cite{Luan_Yen_2015}, Luan, Yao, and Yen \cite{Luan_Yao_Yen_2016}.

\medskip
The remaining part of our paper has four sections. Section 2 collects some necessary preliminaries. Section 3 is devoted to the solution existence of generalized polyhedral convex optimization problems. Optimality conditions for generalized polyhedral convex optimization problem are studied in Section 4. A duality theory for this class of  problems is presented in the final section.  

\section{Preliminaries}
\setcounter{equation}{0}

In the sequel, we will need some results on generalized polyhedral convex sets and generalized polyhedral convex set functions, which are recalled below. 

\medskip
From now on, if not otherwise stated, $X$ is a {\it locally convex Hausdorff topological vector space} (lcHtvs). Denote by $X^*$ the dual space of $X$ and by  $\langle x^*, x \rangle$ the value of~$x^* \in X^*$ at $x \in X$. The \textit{annihilator} \cite[p.~117]{Luenberger_1969} of a subset $C\subset X$, denoted by $C^{\perp}$, is defined by $C^{\perp}:=\{x^* \in X^* \mid \langle x^*, u \rangle = 0, \ \forall u \in C\}$. For a subset $\Omega$ of $X$, by $\overline{\Omega}$ we denote the topological closure of $\Omega$. This notation is also used for subsets of $X^*$.        

\begin{definition}{\rm (See \cite[p.~133]{Bonnans_Shapiro_2000})} A subset $D \subset X$ is said to be a \textit{generalized polyhedral convex set}, or a \textit{generalized convex polyhedron}, if there exist $x^*_i \in X^*$, $\alpha_i \in \mathbb R$, $i=~1,2,\dots,p$, and a closed affine subspace $L \subset X$, such that 
\begin{equation}\label{eq_def_gpcs}
D=\left\{ x \in X \mid x \in L,\ \langle x^*_i, x \rangle \leq \alpha_i,\  i=1,\dots,p\right\}.
\end{equation} 
If $D$ can be represented in the form \eqref{eq_def_gpcs} with $L=X$, then we say that it is a \textit{polyhedral convex set}, or a \textit{convex polyhedron}.
\end{definition}

If $L \subset X$ is a closed affine subspace, then one can find a continuous surjective linear mapping $A$ from $X$ to a lcHtvs $Y$ and a vector $y \in Y$ such that $L=\left\{x \in X \mid Ax=y  \right\}$ (see \cite[Remark~2.196]{Bonnans_Shapiro_2000}). Therefore, we can rewrite \eqref{eq_def_gpcs} in the form
\begin{equation}\label{eq_def_gpcs_2}
D=\left\{ x \in X \mid Ax=y,\  \langle x^*_i, x \rangle \leq \alpha_i,\  i=1,\dots,p\right\}.
\end{equation}   
It is clear that, when $X$ is finite-dimensional, a subset $D\subset X$ is a gpcs if and only if it is a pcs. 

\medskip
Later on, if not otherwise stated, $D \subset X$ is a nonempty generalized polyhedral convex set given by \eqref{eq_def_gpcs_2}. Set $I=\{1,\dots,p\}$ and $I(x)=\left\{i \in I \mid \langle x^*_i, x \rangle = \alpha_i \right\}$ for $x \in D$. If $D$ is a pcs, then one can choose $Y=\{0\}$, $A\equiv 0$, and $y=0$. 

\medskip
Let $C \subset X$ be a nonempty convex set. As in \cite[p.~61]{Rockafellar_1970}, the {\it recession cone} of $C$ is defined by $$0^{+}C:=\left\{v \in X \mid x+tv \in C, \ \forall x \in X,\ \forall t \geq 0\right\}.$$ On account of \cite[p.~33]{Bonnans_Shapiro_2000}, if $C$ is nonempty and closed, then $0^+C$ is a closed convex cone, and $v \in X$ belongs to~$0^{+}C$ if and only if there exists an $x \in C$ such that $x + tv \in C$ for all $t \geq 0$. 

\begin{remark}\label{rec_cone_gpcs} If a nonempty generalized polyhedral convex set $D$ is given by \eqref{eq_def_gpcs_2}, then its recession cone can be computed by the formula $$0^{+}D=\left\{ v \in X \mid Av=0,\  \langle x^*_i, v \rangle \leq 0,\  i=1,\dots,p\right\}.$$ It follows that $0^{+}D$ is a generalized polyhedral convex cone.
\end{remark}

Following \cite[p.~122]{Aubin_Frankowska_2009}, we can define the Bouligand-Severi \textit{tangent cone} $T_C(x)$ to a closed subset $C \subset X$ at $x \in C$ as the set of all $v\in X$ such that there exist sequences $t_k\to 0^+$ and $v_k\to v$ such that $x+t_kv_k\in C$ for every $k$. If $C$ is a nonempty convex set, then $T_C(x)=\overline{{\rm cone}\,(C-x)}.$ By \cite[Proposition~2.19]{Luan_Yao_Yen_2016}, if $C$ is a generalized polyhedral convex set (resp., a polyhedral convex set) then, for any $x\in C$, the cone $T_C(x)$ is generalized polyhedral convex (resp., polyhedral convex) and one has $T_C(x)={\rm cone}\,(C-x)$.

\medskip
Let $f$ be a function from $X$ to $\bar{\mathbb{R}}:=\mathbb{R} \cup \{\pm \infty\}$. The \textit{effective domain} and the \textit{epigraph} of $f$ are defined respectively by setting ${\rm dom}\,f=\{x \in X \mid f(x) < +\infty\}$ and ${\rm epi}\,f=\left\{(x, \alpha) \in X \times \mathbb{R} \mid x \in {\rm dom}\,f,\ f(x) \leq \alpha \right\}.$  If ${\rm dom}\,f$ is a nonempty set and $f(x) > - \infty$ for all $x \in X$, then $f$ is said to be \textit{proper}. One says that $f$ is  \textit{convex} if  ${\rm epi}f$ is a convex set in $X \times \mathbb{R}$. 

\medskip
Following \cite[p.~66]{Rockafellar_1970}, we define the {\it recession function} $f0^{+}$ of a proper convex function $f: X \to \bar{\mathbb{R}}$ by the formula
\begin{equation}\label{eq_rec_fun}
f0^{+}(v)=\inf \left\{\mu \in \mathbb{R} \mid (v, \mu) \in 0^{+}({\rm epi} f) \right\} \quad (v \in X).
\end{equation}

\begin{remark} If $f$ is nonconvex, similar notions bearing the names of \textit{asymptotic function} \cite[p.~48]{Auslender_Teboulle} and \textit{horizon function} \cite[p.~86]{Rockafellar_Wets_1998}  have been defined. It is not difficult to show that if $f$ is proper convex and lower semicontinuous (i.e., ${\rm epi}f$ is a closed convex set), these notions coincide with that of recession function. 
\end{remark}

\begin{definition}\label{Def_gpcf} {\rm (See \cite[Definition~3.1]{Luan_Yao_Yen_2016})} One calls $f:X\to\bar{\mathbb{R}}$ is a \textit{generalized polyhedral convex function} (resp., a \textit{polyhedral convex function}) if the epigraph ${\rm epi}f$ is a generalized polyhedral convex set (resp., a polyhedral convex set) in $X\times\mathbb R$. If $-f$ is a generalized polyhedral convex function (resp., a \textit{polyhedral convex function}), then~$f$ is said to be a \textit{generalized polyhedral concave function} (resp., a \textit{polyhedral concave function}).
\end{definition}

From Definition~\ref{Def_gpcf}, we can assert that every generalized polyhedral convex function is a convex function. Of course, in the case where $X$ is finite-dimensional, a function $f:X\to\bar{\mathbb{R}}$ is generalized polyhedral convex if and only if it is polyhedral convex. 

\medskip
The following theorem shows that, any generalized polyhedral convex function (resp., any polyhedral convex function) can be represented in the form of the maximum of a finite family of continuous affine functions over a certain generalized polyhedral convex set (resp., a polyhedral convex set). 

\begin{theorem}\label{properties_gpcf}{\rm (See \cite[Theorem 3.2]{Luan_Yao_Yen_2016})} The following properties of a proper convex function $f:X\to\bar{\mathbb{R}}$ are equivalent:
	\begin{description}
		\item{\rm (a)} \textit{$f$ is generalized polyhedral convex (resp., polyhedral convex)};
		\item{\rm (b)} \textit{${\rm dom}\,f$ is a generalized polyhedral convex set (resp., a polyhedral convex set) in~$X$ and there exist $v_k^* \in X^*$, $\beta_k \in \mathbb{R}$, for $k=1,\dots,m$, such that}
		\begin{equation}\label{eq_rep_gcpf}
		f(x)=\max \left\{ \langle v_k^*, x \rangle + \beta_k \mid k=1,\dots,m  \right\} \quad (x \in {\rm dom}\,f).
		\end{equation}	  
	\end{description} 
\end{theorem}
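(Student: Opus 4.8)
The plan is to prove the equivalence by establishing the two implications separately, using throughout the identification $(X\times\mathbb{R})^*=X^*\times\mathbb{R}$, under which a continuous linear functional on $X\times\mathbb{R}$ acts as $(x,\alpha)\mapsto\langle x^*,x\rangle+s\alpha$ for some $(x^*,s)\in X^*\times\mathbb{R}$.

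First I would dispatch the easier implication (b)$\Rightarrow$(a). Assuming $D:=\mathrm{dom}\,f$ is a gpcs given by \eqref{eq_def_gpcs_2} and $f(x)=\max_{1\le k\le m}\{\langle v_k^*,x\rangle+\beta_k\}$ on $D$, I observe that $(x,\alpha)\in\mathrm{epi}\,f$ holds precisely when $x\in D$ and $\langle v_k^*,x\rangle+\beta_k\le\alpha$ for every $k$. Writing $x\in D$ as $Ax=y$ together with $\langle x_i^*,x\rangle\le\alpha_i$, each of these conditions becomes, in $X\times\mathbb{R}$, either the affine constraint $\widetilde{A}(x,\alpha)=y$ with $\widetilde{A}(x,\alpha):=Ax$ (continuous and surjective), a half-space $\langle(x_i^*,0),(x,\alpha)\rangle\le\alpha_i$, or a half-space $\langle(v_k^*,-1),(x,\alpha)\rangle\le-\beta_k$. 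Hence $\mathrm{epi}\,f$ is the intersection of a closed affine subspace with finitely many closed half-spaces of $X\times\mathbb{R}$, i.e. a gpcs; and when $D$ is a pcs (so that $Y=\{0\}$) the affine constraint disappears and $\mathrm{epi}\,f$ is a pcs.

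For the converse (a)$\Rightarrow$(b), since $\mathrm{epi}\,f$ is a nonempty gpcs I would represent it via \eqref{eq_def_gpcs_2} as $\{(x,\alpha)\mid B(x,\alpha)=c,\ \langle v_j^*,x\rangle+\eta_j\alpha\le\gamma_j,\ j=1,\dots,q\}$. The decisive observation is that $(0,1)$ is a recession direction of $\mathrm{epi}\,f$: if $(x_0,\alpha_0)\in\mathrm{epi}\,f$ then $(x_0,\alpha_0+t)\in\mathrm{epi}\,f$ for all $t\ge0$. Substituting $(x_0,\alpha_0+t)$ into the $j$-th inequality and letting $t\to+\infty$ forces $\eta_j\le0$; substituting it into $B(x,\alpha)=c$ forces $B(0,1)=0$, so $B$ ignores the last coordinate and $M:=\{(x,\alpha)\mid B(x,\alpha)=c\}=L_0\times\mathbb{R}$ for a closed affine subspace $L_0\subset X$. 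I would then split the inequalities into those with $\eta_j=0$, which constrain $x$ alone, and those with $\eta_j<0$, which after normalizing $\eta_j=-1$ read $\alpha\ge\langle u_k^*,x\rangle+b_k$.

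With this normal form I set $D':=\{x\in L_0\mid \langle v_j^*,x\rangle\le\gamma_j,\ \eta_j=0\}$, which is a gpcs. Properness of $f$ guarantees at least one index with $\eta_j<0$ (otherwise every $x\in D'$ would admit arbitrarily small $\alpha$, giving $f\equiv-\infty$ on a nonempty set); and for $x\in D'$ the fibre $\{\alpha\mid(x,\alpha)\in\mathrm{epi}\,f\}$ is exactly $[\max_k(\langle u_k^*,x\rangle+b_k),+\infty)$. Reading this off yields $\mathrm{dom}\,f=D'$, a gpcs, and $f(x)=\max_k\{\langle u_k^*,x\rangle+b_k\}$ there, which is (b); the polyhedral case follows since $L_0=X$ when $\mathrm{epi}\,f$ is a pcs. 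I expect the main obstacle to be exactly this structural reduction of the representation of $\mathrm{epi}\,f$ — in particular, showing that the affine subspace must have the product form $L_0\times\mathbb{R}$ and that no constraint can bound $\alpha$ from above — together with the bookkeeping needed to keep the half-space data consistent with the max-formula data and to ensure properness excludes the value $-\infty$.
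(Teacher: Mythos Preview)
The paper does not prove this theorem; it is quoted as a preliminary result from \cite[Theorem~3.2]{Luan_Yao_Yen_2016} without proof, so there is no in-paper argument to compare against. Your argument is correct and is essentially the standard one: the implication (b)$\Rightarrow$(a) is a direct rewriting of $\mathrm{epi}\,f$ as an intersection of a closed affine subspace with finitely many half-spaces, and for (a)$\Rightarrow$(b) you exploit the recession direction $(0,1)$ of $\mathrm{epi}\,f$ to force the affine constraint to have the form $L_0\times\mathbb{R}$ and every half-space coefficient on $\alpha$ to be nonpositive, then split the constraints accordingly. One small point of care: when you invoke properness to rule out the case where every $\eta_j=0$, you implicitly use that $D'$ is nonempty; this follows immediately from $\mathrm{dom}\,f\subset D'$ (any $(x,f(x))\in\mathrm{epi}\,f$ already satisfies the $\eta_j=0$ constraints and lies in $L_0$), which is worth stating before the properness step rather than after.
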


Consider a {\it generalized polyhedral convex optimization problem} 
\begin{equation*}
{\rm (}{\mathcal{P}}{\rm )} \qquad \min \left\{ f(x) \mid x \in D\right\} 
\end{equation*}
where, as before, $X$ is a locally convex Hausdorff topological vector space, $D \subset X$ a nonempty generalized polyhedral convex set, and $f: X \to \bar{\mathbb{R}}$ a proper generalized polyhedral convex function. We say that $u \in D$ is a solution of ${\rm (}{\mathcal{P}}{\rm )}$ if~$f(u)$ is finite and $f(u) \leq f(x)$ for all $x \in D$. The solution set of ${\rm (}{\mathcal{P}}{\rm )}$ is denoted by ${\rm Sol}{\rm (}{\mathcal{P}}{\rm )}$.

\medskip
\textit{From now on, if not otherwise stated, the constraint set $D$ is given by \eqref{eq_def_gpcs_2}, and the objective function $f$ is defined by \eqref{eq_rep_gcpf}.}

\medskip
Since  ${\rm dom}\,f$ is a gpcs, it admits the representation
\begin{equation}\label{rep_domf}
{\rm dom}\,f =\left\{ x \in X \mid Bx=z,\  \langle u^*_j, x \rangle \leq \gamma_j,\  j=1,\dots,q\right\},
\end{equation}
where $B$ is a continuous linear mapping from $X$ to a lcHtvs $Z$, $z \in Z$, $u^*_j \in X^*$, $\gamma_j \in \mathbb{ R}$, $j=1,\dots,q$. Set $J=\{1,\dots,q\}$. For each $x \in {\rm dom}\, f$, let $J(x)=\left\{j \in J \mid \langle u^*_j, x \rangle = \gamma_j \right\}$ and $$\Theta(x)=\left\{k \in \{1,\dots,m\} \mid  \langle v_k^*, x \rangle +\beta_k=f(x)\right\}.$$ If $f$ is a polyhedral convex function, then ${\rm dom}\, f$ is polyhedral convex by Theorem~\ref{properties_gpcf}; hence, we can choose $Z=\{0\}$, $B\equiv 0$, and $z=0$.  

\medskip
Let $C \subset X$ be a nonempty convex set. The \textit{normal cone} to $C$ at $x \in C$ is the set $$N_C(x):=\left\{x^* \in X^* \mid \langle x^*, u-x\rangle \leq 0, \ \forall u \in C\right\}.$$ Clearly, $N_C(x)$ is a closed convex cone in~$X^*$, while $C^{\perp}$ is a closed linear subspace of~$X^*$. If $C$ is a linear subspace of $X$, then $N_C(x)=C^{\perp}$ for all $x \in C$. We observe that if $D$ is given by \eqref{eq_def_gpcs_2} then, due to \cite[Proposition~4.2]{Luan_Yao_Yen_2016},
\begin{equation}\label{norm_cone_gpcs}
N_D(x)={\rm cone}\{x^*_i \mid i \in I(x)\}+({\rm ker}\,A)^{\perp}  \quad (x \in D),
\end{equation}
with ${\rm cone}\,\Omega$ denoting the convex cone generated by a subset $\Omega\subset X^*$.

\medskip
As in \cite[p.~172]{Ioffe_Tihomirov_1979}, the {\it conjugate function} $f^{*}: X^* \to \bar{\mathbb{R}}$  of $f:X\to \bar{\mathbb{R}}$ is given by $f^{*}(x^*)=\sup\limits_{x \in X}\, [\langle x^*, x \rangle - f(x)].$ By \cite[ Proposition~3, p.~174]{Ioffe_Tihomirov_1979}, if $f$ is proper convex and lower semicontinuous, then $f^{*}$ is a proper convex lower semicontinuous function. Obviously, $f^{*}(x^*)=\sup\limits_{x \in {\rm dom}\,f}\, [\langle x^*, x \rangle - f(x)]$ for every $x^* \in X^*$. According to \cite[ Theorem~4.12]{Luan_Yao_Yen_2016}, the conjugate function of a proper gpcf is a proper gpcf.   

\medskip
The notion of subdifferential is the basis for optimality conditions and  other issues in convex programming. The \textit{subdifferential} \cite[p.~46]{Ioffe_Tihomirov_1979} of a proper convex function $f$ at~$x \in {\rm dom}f$ is the set
\begin{equation*}
\partial f(x):=\{x^* \in X^* \mid \langle x^*, u - x \rangle \leq f(u)-f(x), \,  \forall u \in X\}.
\end{equation*}
By \cite[Propostion~1, p.~197]{Ioffe_Tihomirov_1979}, an element $x^* \in X^*$ belongs to $\partial f(x)$ if and only if $f(x)+f^*(x^*)=\langle x^*, x \rangle$. If $f$ is a proper generalized polyhedral convex function, then $\partial f(x)$ is a gpcs for every $x \in {\rm dom}\,f$; see \cite[Proposition~4.15]{Luan_Yao_Yen_2016}. For a nonempty convex subset $C \subset X$, we have $\partial \delta(x, C)=N_C(x)$ for any $x \in C$, where~$\delta(\cdot, C)$ is the indicator function of $C$. 

\begin{remark} On account of \cite[Theorem~4.14]{Luan_Yao_Yen_2016}, if $f$ is defined by \eqref{eq_rep_gcpf} with ${\rm dom}\,f$ being given by \eqref{rep_domf} then, for any $x \in {\rm dom}\,f$, we have
	\begin{equation}\label{rep_subd}
	\partial f(x)={\rm conv}\,\left\{v_k^* \mid  k \in \Theta(x)\right\}+{\rm cone}\,\left\{u^*_j \mid  j \in J(x)\right\}+ ({\rm ker}\,B)^{\perp}
	\end{equation}
where ${\rm conv}\,\Omega$ denotes the convex hull of a subset $\Omega\subset X^*$.
\end{remark}	

The specific structure of  generalized polyhedral convex functions allows one to have a subdifferential sum rule without any assumption on continuity.
\begin{lemma}\label{sum_rules} {\rm (See \cite[Theorems 4.16 and 4.17]{Luan_Yao_Yen_2016})} Suppose that $f_1$ is a proper polyhedral convex function. 
	\begin{description}
	\item{\rm (a)} \textit{If $f_2$ is a proper generalized polyhedral convex function, then} 
		\begin{equation*}\label{eq_sum_subdifferentials_gpcf}
		\partial (f_1+f_2)(x)=\overline{\partial f_1(x)+ \partial f_2(x)}, \quad (x \in ({\rm dom}\,f_1) \cap ({\rm dom}\,f_2)).
		\end{equation*}
	\item{\rm (b)} \textit{If $f_2$ is a proper polyhedral convex function, then} 
		\begin{equation*}\label{eq_sum_subdifferentials_pcf_gpcf}
		\partial (f_1+f_2)(x)=\partial f_1(x)+ \partial f_2(x), \quad (x \in ({\rm dom}\,f_1) \cap ({\rm dom}\,f_2)).
		\end{equation*}
	\end{description} 
\end{lemma}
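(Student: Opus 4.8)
The plan is to prove both parts by splitting the asserted identity into two inclusions, dispatching the elementary one uniformly and reserving the explicit subdifferential formula \eqref{rep_subd} for the substantial one.

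First I would record that for any proper convex $g$ the set $\partial g(x)$ is closed, being the intersection over $u\in X$ of the half-spaces $\{x^*\mid\langle x^*,u-x\rangle\le g(u)-g(x)\}$, each of which is closed because every $u-x\in X$ acts continuously on $X^*$. Adding the two subgradient inequalities then gives at once $\partial f_1(x)+\partial f_2(x)\subseteq\partial(f_1+f_2)(x)$, and since the right-hand side is closed we obtain $\overline{\partial f_1(x)+\partial f_2(x)}\subseteq\partial(f_1+f_2)(x)$ in both (a) and (b). This reduces each part to the reverse inclusion.

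For the reverse inclusion I would pass to the representations supplied by Theorem \ref{properties_gpcf}. Write $f_1=\max_k[\langle a_k^*,\cdot\rangle+b_k]$ over $k\in K_1$ on the polyhedral convex set $\mathrm{dom}\,f_1=\{x\mid\langle e_j^*,x\rangle\le\gamma_j,\ j\in J_1\}$, and $f_2=\max_l[\langle c_l^*,\cdot\rangle+d_l]$ over $l\in K_2$ on the generalized polyhedral convex set $\mathrm{dom}\,f_2=\{x\mid B_2x=z_2,\ \langle g_j^*,x\rangle\le\delta_j,\ j\in J_2\}$; because $f_1$ is polyhedral its equality block is void. On $(\mathrm{dom}\,f_1)\cap(\mathrm{dom}\,f_2)$ one has $(f_1+f_2)(x)=\max_{k,l}[\langle a_k^*+c_l^*,x\rangle+b_k+d_l]$, and since the intersection of two generalized polyhedral convex sets is again such a set, $f_1+f_2$ is a proper generalized polyhedral convex function (Theorem \ref{properties_gpcf}) whose domain is already in the form \eqref{rep_domf} with equality map $B_2$ and active inequality indices $J_1(x)\cup J_2(x)$. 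Thus \eqref{rep_subd} applies to $f_1+f_2$. The key combinatorial point is that its active set at $x$ factors: since each summand is dominated by the respective $f_i(x)$, a pair $(k,l)$ is active exactly when $k\in\Theta_1(x)$ and $l\in\Theta_2(x)$, so the active set equals $\Theta_1(x)\times\Theta_2(x)$. Invoking the identities $\mathrm{conv}(A+B)=\mathrm{conv}\,A+\mathrm{conv}\,B$ and $\mathrm{cone}(E\cup G)=\mathrm{cone}\,E+\mathrm{cone}\,G$, formula \eqref{rep_subd} for $f_1+f_2$ regroups precisely into $[\mathrm{conv}\{a_k^*\}+\mathrm{cone}\{e_j^*\}]+[\mathrm{conv}\{c_l^*\}+\mathrm{cone}\{g_j^*\}+(\ker B_2)^\perp]$, that is, into $\partial f_1(x)+\partial f_2(x)$; this is the reverse inclusion, and in fact exact equality into the unclosed sum.

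It remains to understand the asymmetry between (a) and (b), which I regard as the main subtlety. The route through \eqref{rep_subd} already delivers $\partial(f_1+f_2)(x)=\partial f_1(x)+\partial f_2(x)$, so the closure in (a) is consistent with it, the bar being kept only to mirror the general convex-analytic (Moreau--Rockafellar) form. If instead one avoids the full strength of \eqref{rep_subd} and argues more classically, deducing the reverse inclusion from an attainment property of the infimal convolution $f_1^*\,\square\,f_2^*$, then the reverse inclusion lands only in $\overline{\partial f_1(x)+\partial f_2(x)}$, and the remaining task is to show the Minkowski sum is already closed. This is where the distinction becomes operative: in (b) one has $B_2\equiv0$, so $(\ker B_2)^\perp=\{0\}$ and both subdifferentials lie in a common finite-dimensional subspace, where the sum of two finitely generated polyhedral sets is finitely generated and hence closed by Minkowski--Weyl, so the bar is removed; in (a) the possibly infinite-dimensional term $(\ker B_2)^\perp$ intervenes, and closedness of $P+S$, with $P$ a finite-dimensional polyhedral set and $S=(\ker B_2)^\perp$ a closed subspace, would be obtained by passing to the Hausdorff quotient $X^*/S$, where the image of $P$ is a closed polyhedral set in a finite-dimensional subspace so that $P+S$ is the preimage of a closed set. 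Establishing this closedness, together with the combinatorial factoring of the active index set, is the part I expect to require the most care.
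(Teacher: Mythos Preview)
The paper does not prove this lemma; it is quoted verbatim from \cite[Theorems 4.16 and 4.17]{Luan_Yao_Yen_2016}, so there is no in-paper argument to compare against. Your route through formula \eqref{rep_subd} is sound: the factoring of the active index set of $f_1+f_2$ as $\Theta_1(x)\times\Theta_2(x)$ is correct, the identities $\mathrm{conv}(A+B)=\mathrm{conv}\,A+\mathrm{conv}\,B$ and $\mathrm{cone}(E\cup G)=\mathrm{cone}\,E+\mathrm{cone}\,G$ are valid, and with $f_1$ polyhedral the equality block of $\mathrm{dom}(f_1+f_2)$ is just $B_2$, so the regrouping delivers $\partial(f_1+f_2)(x)=\partial f_1(x)+\partial f_2(x)$ exactly, in both (a) and (b). Your observation that the closure bar in (a) is then superfluous is correct.

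What this reveals is that the lemma header almost certainly contains a misprint: the paper itself applies part (a) in the proof of Theorem~\ref{Optimality_condition_1} with $f_1=f$ and $f_2=\delta(\cdot,D)$ both merely \emph{generalized} polyhedral, and Example~\ref{Ex_optim_cond} shows the closure is genuinely needed there. The intended header is presumably ``$f_1$ is a proper generalized polyhedral convex function''. In that generality your same argument still applies, but now $\mathrm{dom}\,f_1$ carries its own equality block $B_1$, and \eqref{rep_subd} for $f_1+f_2$ produces the annihilator $(\ker B_1\cap\ker B_2)^\perp$, whereas the regrouped sum $\partial f_1(x)+\partial f_2(x)$ produces only $(\ker B_1)^\perp+(\ker B_2)^\perp$. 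The latter is in general a proper dense subset of the former (this is precisely the phenomenon in Example~\ref{Ex_optim_cond}), so one obtains $\partial(f_1+f_2)(x)=\overline{\partial f_1(x)+\partial f_2(x)}$ rather than the unclosed equality. Your final paragraph gestures at this, but misidentifies the source of the closure: it is not a question of closedness of $P+S$ for a single closed subspace $S$ (that sum is indeed closed, by your quotient argument), but of the possible failure of $(\ker B_1)^\perp+(\ker B_2)^\perp$ to be closed when two annihilators are summed.
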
	

\section{Solution Existence Theorems}
\setcounter{equation}{0}

Several solution existence theorems for generalized polyhedral convex optimization problems will be obtained in this section. 

\begin{theorem}\label{FW_exist_thm} {\rm (A Frank--Wolfe-type existence theorem)} If $D \cap {\rm dom}\,f$ is nonempty then, ${\rm (}{\mathcal{P}}{\rm )}$ has a solution if and only if there is a real value $\gamma$ such that $f(x) \geq \gamma$ for every $x \in D$.
\end{theorem}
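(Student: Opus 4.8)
The plan is to dispose of the trivial implication and then reduce the substantive one to a generalized linear program, whose solution existence is already available. The ``only if'' part is immediate: if $u \in {\rm Sol}(\mathcal P)$, then $f(u)$ is finite and $f(u) \le f(x)$ for every $x \in D$, so $\gamma := f(u)$ has the required property. For the converse I assume $f(x) \ge \gamma$ for all $x \in D$, and the idea is to lift $(\mathcal P)$ to $X \times \mathbb{R}$ in epigraphic form: minimize the continuous linear functional $\ell(x,t) := t$ over the set
$$\widetilde D := (D \times \mathbb{R}) \cap {\rm epi}\,f.$$

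First I would verify that $\widetilde D$ is a nonempty generalized polyhedral convex set in $X \times \mathbb{R}$. Indeed, $D \times \mathbb{R}$ is a gpcs (append a free coordinate to the description \eqref{eq_def_gpcs_2} of $D$), the set ${\rm epi}\,f$ is a gpcs by Definition~\ref{Def_gpcf} because $f$ is generalized polyhedral convex, and the intersection of two generalized polyhedral convex sets is again such a set (merge the two lists of affine and half-space constraints). Nonemptiness follows from $D \cap {\rm dom}\,f \neq \emptyset$: for $x_0 \in D \cap {\rm dom}\,f$ the pair $(x_0, f(x_0))$ lies in $\widetilde D$.

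Next I would record two identifications. For fixed $x$, the smallest $t$ with $(x,t) \in {\rm epi}\,f$ is $f(x)$, whence $\inf_{(x,t) \in \widetilde D}\ell(x,t) = \inf_{x \in D} f(x)$; the hypothesis thus says precisely that $\ell$ is bounded below on $\widetilde D$ (by $\gamma$). By the solution existence theorem for generalized linear programming of Luan and Yen~\cite{Luan_Yen_2015}---a feasible generalized linear program whose objective is bounded below on its nonempty generalized polyhedral convex feasible set is solvable---there is $(\bar x, \bar t) \in \widetilde D$ minimizing $\ell$. It remains to check that $\bar x$ solves $(\mathcal P)$: since $(\bar x, f(\bar x)) \in \widetilde D$ and $f(\bar x) \le \bar t$, minimality of $\bar t$ forces $\bar t = f(\bar x)$; hence $f(\bar x) = \bar t \le t$ for every $(x,t) \in \widetilde D$, and taking $t = f(x)$ for arbitrary $x \in D$ gives $f(\bar x) \le f(x)$ on $D$ with $f(\bar x)$ finite, i.e. $\bar x \in {\rm Sol}(\mathcal P)$.

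The main obstacle here is conceptual rather than computational. In an infinite-dimensional lcHtvs, boundedness below of a convex objective on a closed convex set does not in general force the infimum to be attained, since no compactness is at hand; the polyhedral structure is what makes attainment succeed. All of that weight is carried by the generalized linear programming existence result, which rests on the structure theory of generalized polyhedral convex sets and in particular on the description of their recession cones (cf. Remark~\ref{rec_cone_gpcs}): concretely, $\ell$ is bounded below on $\widetilde D$ if and only if $s \ge 0$ for every $(v,s) \in 0^+\widetilde D$, and it is exactly this recession-cone condition that upgrades boundedness to attainment for generalized polyhedral data. If a self-contained argument were wanted in place of the external citation, I would instead work directly with the piecewise-affine representation \eqref{eq_rep_gcpf} of $f$ on the gpcs $D \cap {\rm dom}\,f$, but the reduction above is the cleanest route.
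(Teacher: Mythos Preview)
Your proof is correct and follows essentially the same route as the paper: lift $(\mathcal P)$ to the linear problem of minimizing the second coordinate over the gpcs $\widetilde D=(D\times\mathbb R)\cap{\rm epi}\,f$, then exploit the polyhedral structure to pass from boundedness below to attainment. The only cosmetic difference is in which external result carries the last step---the paper shows that the linear image $\Phi(\widetilde D)\subset\mathbb R$ is a closed half-line via \cite[Proposition~2.1]{Luan_2016}, whereas you invoke the generalized linear programming existence theorem \cite[Theorem~3.3]{Luan_Yen_2015} directly; both are equivalent ways to cash in the polyhedrality.
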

\begin{proof}The necessity  is obvious. To prove the sufficiency, suppose that there exists a constant $\gamma \in \mathbb{R}$ such that $f(x) \geq \gamma$ for all $x \in D$.  Clearly, $\Phi: X \times \mathbb{R} \rightarrow~\mathbb{R}$, $(x, \alpha) \mapsto \alpha$ for all $(x, \alpha) \in X \times \mathbb{R}$, is a linear mapping. Since ${\rm epi}f \cap (D \times \mathbb{R})$ is a nonempty gpcs in $X \times \mathbb{R}$, by \cite[Proposition~2.1]{Luan_2016} we can assert that $T:=\Phi\left({\rm epi}f \cap (D \times \mathbb{R})\right)$ is a nonempty pcs in $\mathbb{R}$. Hence, $T$ is convex and closed. For every $t \in T$, there exists an $x \in D$ satisfying $(x, t) \in {\rm epi}f$, i.e., $t \geq f(x)$; hence $t \geq f(x) \geq \gamma$. In addition, for every $t' \geq t$, since $(x, t') \in {\rm epi}f \cap  (D \times \mathbb{R})$, one has $t' \in T$. So, we must have $T=[\bar{\gamma},+\infty)$ for some $\bar{\gamma} \geq \gamma$. On one hand, for every $x \in D$, the inclusion $(x, f(x)) \in{\rm epi}f \cap (D \times \mathbb{R})$ yields $f(x) \in T$; hence $f(x) \geq \bar{\gamma}$. On the other hand, since $\bar{\gamma} \in T$, we can find $\bar{x} \in D$ such that $(\bar{x}, \bar{\gamma})  \in {\rm epi}f \cap (D \times \mathbb{R})$. Then we have $\bar{\gamma} \geq f(\bar{x})$ and $f(x) \geq \bar{\gamma} \geq f(\bar{x})$ for every $x \in D$. Thus, $\bar{x}$ is a solution of~${\rm (}{\mathcal{P}}{\rm )}$. $\hfill\Box$
\end{proof}

\begin{remark} Due to the similarity of the formulations of Theorem~\ref{FW_exist_thm} and the solution existence theorem in quadra\-tic programming in \cite[p.~158]{Frank_Wolfe_1956} (see also \cite[Theorem~2.1]{Lee_Tam_Yen_2005}), we call the above result a Frank--Wolfe-type existence theorem in generalized polyhedral convex optimization. If the function $f$ is linear, Theorem~\ref{FW_exist_thm} expresses a recent result in \cite[Theorem~3.3]{Luan_Yen_2015}. For the case $X=\mathbb{R}^n$ and $D=X$, the result in Theorem~\ref{FW_exist_thm} is a known one (see \cite[p.~215]{Bertsekas_et_al_2003}). 
\end{remark}

\begin{theorem}\label{E_exist_thm} {\rm (An Eaves-type existence theorem)} Suppose that $D \cap {\rm dom}\,f$ is non\-empty. Then ${\rm (}{\mathcal{P}}{\rm )}$ has a solution if and only if $f0^{+}(v) \geq 0$ for every $v \in 0^+D$.
\end{theorem}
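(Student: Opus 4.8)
The plan is to reduce the assertion to the Frank--Wolfe-type result in Theorem~\ref{FW_exist_thm}: since $D\cap{\rm dom}\,f\neq\emptyset$, that theorem says $(\mathcal{P})$ is solvable if and only if $f$ is bounded below on $D$, so it suffices to prove that $f$ is bounded below on $D$ if and only if $f0^+(v)\ge 0$ for every $v\in 0^+D$. The first thing I would record is an explicit formula for the recession function. Writing $\ell_k(x)=\langle v_k^*,x\rangle+\beta_k$, the representations \eqref{eq_rep_gcpf} and \eqref{rep_domf} give ${\rm epi}\,f=\{(x,\alpha)\mid Bx=z,\ \langle u_j^*,x\rangle\le\gamma_j\ (j\in J),\ \langle v_k^*,x\rangle-\alpha\le-\beta_k\ (k=1,\dots,m)\}$, so Remark~\ref{rec_cone_gpcs} yields $0^+({\rm epi}\,f)=\{(v,\mu)\mid Bv=0,\ \langle u_j^*,v\rangle\le0\ (j\in J),\ \langle v_k^*,v\rangle\le\mu\ (k)\}$. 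Feeding this into the defining formula \eqref{eq_rec_fun}, I obtain $f0^+(v)=\max_k\langle v_k^*,v\rangle$ when $v\in 0^+({\rm dom}\,f)$ and $f0^+(v)=+\infty$ otherwise.

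For the necessity I argue by contraposition. Suppose some $v\in 0^+D$ has $f0^+(v)<0$. Finiteness forces $v\in 0^+({\rm dom}\,f)$ and $\max_k\langle v_k^*,v\rangle<0$, hence $\langle v_k^*,v\rangle<0$ for every $k$. Choosing any $u\in D\cap{\rm dom}\,f$, the inclusions $v\in 0^+D$ and $v\in 0^+({\rm dom}\,f)$ give $u+tv\in D\cap{\rm dom}\,f$ for all $t\ge0$, and $f(u+tv)=\max_k[\ell_k(u)+t\langle v_k^*,v\rangle]\to-\infty$ as $t\to+\infty$. Thus $f$ is unbounded below on $D$, and by Theorem~\ref{FW_exist_thm} the problem $(\mathcal{P})$ has no solution.

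Conversely, assume $f0^+(v)\ge0$ for all $v\in 0^+D$; I must produce a finite lower bound for $f$ on $D$. Here I would invoke the representation formula for generalized polyhedral convex sets from \cite{Luan_Yen_2015,Luan_Yao_Yen_2016}: the nonempty gpcs $G:=D\cap{\rm dom}\,f$ can be written as $G={\rm conv}\{w_1,\dots,w_r\}+0^+G$, where $0^+G\subseteq 0^+D$ and $0^+G\subseteq 0^+({\rm dom}\,f)$. Fix $x\in G$ and write $x=\sum_i\lambda_iw_i+v$ with $\lambda_i\ge0$, $\sum_i\lambda_i=1$, $v\in 0^+G$. Since $v\in 0^+D$, the hypothesis and the recession-function formula give $\max_k\langle v_k^*,v\rangle=f0^+(v)\ge0$; pick an index $k^*$ attaining this maximum. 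Using $\sum_i\lambda_i=1$, I get $f(x)\ge\ell_{k^*}(x)=\sum_i\lambda_i\ell_{k^*}(w_i)+\langle v_{k^*}^*,v\rangle\ge\sum_i\lambda_i\ell_{k^*}(w_i)\ge\min_{k,i}\ell_k(w_i)=:C$, a constant independent of $x$. Hence $f\ge C$ on $G$, and since $f\equiv+\infty$ off ${\rm dom}\,f$, the function $f$ is bounded below on $D$; Theorem~\ref{FW_exist_thm} then furnishes a solution. The crux of the whole argument is exactly this sufficiency step: the hypothesis only controls $f$ along recession rays, so converting it into a uniform bound genuinely requires the finite decomposition $G={\rm conv}\{w_i\}+0^+G$ together with the fact that $f0^+$ is finite precisely on $0^+({\rm dom}\,f)$, where it coincides with $\max_k\langle v_k^*,\cdot\rangle$. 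An alternative route would project $E:={\rm epi}\,f\cap(D\times\mathbb{R})$ onto its last coordinate as in the proof of Theorem~\ref{FW_exist_thm}; there the main obstacle instead becomes the recession-cone identity $0^+(\Phi(E))=\Phi(0^+E)$, which again rests on the same polyhedral structure.
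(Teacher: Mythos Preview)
Your proposal is correct and follows essentially the same route as the paper: both first establish the explicit formula for $f0^{+}$ (this is Lemma~\ref{rep_rec_gpcf}), and for the sufficiency both invoke the representation $D\cap{\rm dom}\,f={\rm conv}\{w_i\}+0^{+}(D\cap{\rm dom}\,f)$ from \cite{Luan_Yen_2015} to produce the uniform lower bound $\min_{k,i}\ell_k(w_i)$ and then appeal to Theorem~\ref{FW_exist_thm}. The only cosmetic difference is in the necessity: the paper argues directly from a minimizer $x_0$ via $f(x_0)\le f(x_0+tv)\le f(x_0)+t\,f0^{+}(v)$, whereas you argue by contraposition that $f0^{+}(v)<0$ forces $f(u+tv)\to-\infty$; these are the same computation read in opposite directions.
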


For proving this theorem, we need a lemma.
\begin{lemma}\label{rep_rec_gpcf} If $f$ is a proper generalized polyhedral convex function given by \eqref{eq_rep_gcpf}, then
	\begin{equation}\label{eq_rec_gpcf}
	f0^{+}(v)=\begin{cases}
	\max \left\{ \langle v_k^*, v \rangle \mid k=1,\dots,m \right\} & \text{ if }  v \in 0^{+}({\rm dom}\,f)\\
	+\infty & \text{ if }  v \notin 0^{+}({\rm dom}\,f).
	\end{cases}
	\end{equation}
In particular, $f0^{+}$ is a proper generalized polyhedral convex function.	
\end{lemma}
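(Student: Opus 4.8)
The plan is to reduce everything to an explicit generalized polyhedral description of $\mathrm{epi}\,f$ and then invoke Remark~\ref{rec_cone_gpcs} to read off its recession cone. First I would combine the representation \eqref{eq_rep_gcpf} of $f$ with the representation \eqref{rep_domf} of $\mathrm{dom}\,f$ to write
$$\mathrm{epi}\,f=\bigl\{(x,\alpha)\in X\times\mathbb{R}\mid Bx=z,\ \langle u_j^*,x\rangle\le\gamma_j\ (j\in J),\ \langle v_k^*,x\rangle-\alpha\le-\beta_k\ (k=1,\dots,m)\bigr\},$$
the point being that $\alpha\ge\max_k(\langle v_k^*,x\rangle+\beta_k)$ is equivalent to the $m$ separate inequalities $\langle v_k^*,x\rangle-\alpha\le-\beta_k$. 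This exhibits $\mathrm{epi}\,f$ as a generalized polyhedral convex set in $X\times\mathbb{R}$, with the continuous linear map $(x,\alpha)\mapsto Bx$ playing the role of the affine-subspace data.

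Next I would apply Remark~\ref{rec_cone_gpcs} to this representation. Replacing every constant on the right-hand side by $0$ gives
$$0^{+}(\mathrm{epi}\,f)=\bigl\{(v,\mu)\mid Bv=0,\ \langle u_j^*,v\rangle\le 0\ (j\in J),\ \langle v_k^*,v\rangle\le\mu\ (k=1,\dots,m)\bigr\}.$$
The first two groups of conditions are, again by Remark~\ref{rec_cone_gpcs} applied to \eqref{rep_domf}, exactly the statement that $v\in 0^{+}(\mathrm{dom}\,f)$, while the last group says $\max_k\langle v_k^*,v\rangle\le\mu$. Feeding this into the definition \eqref{eq_rec_fun}, for fixed $v$ the set $\{\mu\mid(v,\mu)\in 0^{+}(\mathrm{epi}\,f)\}$ equals $[\max_k\langle v_k^*,v\rangle,+\infty)$ when $v\in 0^{+}(\mathrm{dom}\,f)$ and is empty otherwise; taking the infimum (with the convention $\inf\emptyset=+\infty$) yields precisely formula \eqref{eq_rec_gpcf}.

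For the final assertion I would observe that the computation above in fact shows $\mathrm{epi}(f0^{+})=0^{+}(\mathrm{epi}\,f)$, which is a generalized polyhedral convex set, so $f0^{+}$ is generalized polyhedral convex directly by definition; alternatively, \eqref{eq_rec_gpcf} says $\mathrm{dom}(f0^{+})=0^{+}(\mathrm{dom}\,f)$ is a gpcs (Remark~\ref{rec_cone_gpcs}) and $f0^{+}$ is the maximum of the continuous affine functions $\langle v_k^*,\cdot\rangle$ on it, so Theorem~\ref{properties_gpcf}(b)$\Rightarrow$(a) applies. Properness is immediate: $0$ lies in the cone $0^{+}(\mathrm{dom}\,f)$ and $f0^{+}(0)=0$, so the domain is nonempty, and \eqref{eq_rec_gpcf} shows $f0^{+}$ never takes the value $-\infty$.

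I do not anticipate a serious obstacle; the two care points are the passage from the single ``max'' inequality to the $m$ separate linear inequalities when writing $\mathrm{epi}\,f$, and the correct handling of the empty-infimum case that produces the value $+\infty$ off $0^{+}(\mathrm{dom}\,f)$. A minor technical nuance is that Remark~\ref{rec_cone_gpcs} is stated for the canonical form \eqref{eq_def_gpcs_2}; since the map $(x,\alpha)\mapsto Bx$ is continuous and linear, that formula applies to $\mathrm{epi}\,f$ verbatim, but if one wishes to bypass any surjectivity bookkeeping the recession cone may instead be obtained straight from the definition of $0^{+}$, which simply homogenizes each defining relation.
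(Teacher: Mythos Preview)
Your proposal is correct and follows essentially the same route as the paper: write $\mathrm{epi}\,f$ in the canonical generalized polyhedral form by combining \eqref{eq_rep_gcpf} with \eqref{rep_domf}, apply Remark~\ref{rec_cone_gpcs} to obtain $0^{+}(\mathrm{epi}\,f)$, and then read off \eqref{eq_rec_gpcf} from the definition \eqref{eq_rec_fun}. Your treatment is in fact slightly more complete than the paper's, which leaves the ``In particular'' assertion implicit, whereas you spell out both the gpcf property (via either $\mathrm{epi}(f0^{+})=0^{+}(\mathrm{epi}\,f)$ or Theorem~\ref{properties_gpcf}) and properness; your caution about surjectivity is unnecessary here, since $B$ is already surjective onto $Z$ and hence so is $(x,\alpha)\mapsto Bx$.
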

\begin{proof} Suppose that ${\rm dom}\, f$ is of the form \eqref{rep_domf}. Then one gets
\begin{equation*}
\begin{aligned}
{\rm epi}f	&=\big\{(x, t) \in X \times \mathbb{R} \mid Bx=z, \  \langle u^*_j, x \rangle \leq \gamma_j, \  j=1,\dots,q, \\
&\hspace*{4cm} \langle v^*_k, x \rangle + \beta_k \leq t,\ k=1,\dots,m \big\}\\
&=\big\{(x, t) \in X \times \mathbb{R} \mid Bx+0t=z,\ \langle u^*_j, x \rangle +0t\leq \gamma_j,\ j=1,\dots,q, \\
&\hspace*{4.7cm}  \langle v^*_k, x \rangle - t \leq -\beta_k ,\ k=1,\dots,m \big\}.\\
\end{aligned}
\end{equation*}
Hence, applying Remark~\ref{rec_cone_gpcs} to ${\rm epi}\, f$ gives
\begin{equation*}
\begin{aligned}
0^{+}({\rm epi}f)&=\big\{(v, \mu) \in X \times \mathbb{R} \mid Bv=0, \ \langle u^*_j, v \rangle \leq 0, \ j=1,\dots,q, \\
&\hspace*{4.1cm}\langle v^*_k, v \rangle - \mu \leq 0, \ k=1,\dots,m \big\}\\
&=\left\{(v, \mu) \in X \times \mathbb{R} \mid v\in  0^{+}({\rm dom}\,f), \ \langle v^*_k, v \rangle\leq\mu,\ k=1,\dots,m \right\}.
\end{aligned}
\end{equation*}
From this and \eqref{eq_rec_fun} we obtain \eqref{eq_rec_gpcf}. $\hfill\Box$
\end{proof}

\noindent
{\it Proof of Theorem~\ref{E_exist_thm}} First, suppose that ${\rm (}{\mathcal{P}}{\rm )}$ has a solution $x_0$. Let $v \in 0^+D$ be given arbitrarily. If $v \notin 0^{+}({\rm dom}\,f)$, then $f0^{+}(v)=+\infty$ by Lemma~\ref{rep_rec_gpcf}. If $v \in 0^{+}({\rm dom}\,f)$, then $f0^{+}(v)=\max \left\{ \langle v_k^*, v \rangle\mid k=1,\dots,m \right\}$ by Lemma~\ref{rep_rec_gpcf}. Select any $t>0$. Since $x_0+tv \in D \cap {\rm dom}\,f$, one has
\begin{equation*}
\begin{aligned}
f(x_0) &\leq f(x_0+tv)=\max \left\{ \langle v_k^*, x_0 \rangle + \beta_k + t  \langle v_k^*, v \rangle\mid k=1,\dots,m \right\} \\
&\leq \max \left\{ \langle v_k^*, x_0 \rangle + \beta_k \mid k=1,\dots,m \right\} + \max \left\{ t  \langle v_k^*, v \rangle\mid k=1,\dots,m \right\}\\
&=f(x_0)+t f0^{+}(v).
\end{aligned}
\end{equation*}   
It follows that $f0^{+}(v) \geq 0$. 

Conversely, suppose that $f0^{+}(v) \geq 0$ for every $v \in 0^+D$. Since $D \cap {\rm dom}\,f$ is a nonempty generalized polyhedral convex set, by the representation theorem for gpcs \cite[Theorem~2.7]{Luan_Yen_2015}, one can find $u_1, \dots, u_d$ in $D \cap {\rm dom}\,f$, $v_1, \dots, v_{\ell}$ in $X$, and a closed linear subspace $X_0 \subset X$ such that 
\begin{equation}\label{rep-1} 
D \cap {\rm dom}\,f={\rm conv}\,\{u_1, \dots, u_d\} +{\rm cone}\,\{v_1, \dots, v_{\ell}\} + X_0.
\end{equation} 
Then, $0^{+}(D \cap {\rm dom}\,f)={\rm cone}\,\{v_1, \dots, v_{\ell}\} + X_0.$ Put
$$\gamma=\min \left\{ \langle v_k^*, u_i \rangle + \beta_k \mid k=1,\dots,m, \, i=1,\dots,d \right\}.$$
One has $f(x) \geq \gamma $ for every $x \in D$. Indeed, if $x \notin {\rm dom}\,f$, then the inequality is obvious, because $f(x)=+\infty$. Now, suppose that $x \in D \cap {\rm dom}\,f$. According to \eqref{rep-1}, there exist $\lambda_1 \geq 0, \dots, \lambda_d \geq 0$, and $v \in 0^{+}(D \cap {\rm dom}\,f)$ satisfying $\sum\limits_{i=1}^d \lambda_i=1$ and $x=\sum\limits_{i=1}^d \lambda_i u_i + v$. For each $k=1,\dots,m$, one has
\begin{equation*}
\begin{aligned}
\langle v_k^*, x \rangle + \beta_k &= \sum\limits_{i=1}^d \lambda_i \langle v_k^*, u_i \rangle + \langle v_k^*, v \rangle + \beta_k= \sum\limits_{i=1}^d \lambda_i  \big(\langle v_k^*, u_i \rangle + \beta_k\big) +\langle v_k^*, v \rangle\\
& \geq \sum\limits_{i=1}^d \lambda_i \gamma +\langle v_k^*, v \rangle = \gamma +\langle v_k^*, v \rangle.
\end{aligned}
\end{equation*}   
Consequently, $$\max\{ \langle v_k^*, x \rangle + \beta_k  \mid k=1,\dots,m \} \geq \max\{ \gamma +\langle v_k^*, v \rangle \mid k=1,\dots,m \}.$$  
Combing this with \eqref{eq_rep_gcpf}, we obtain 
\begin{equation*}
f(x)\geq \max\{\gamma +\langle v_k^*, v \rangle  \mid k=1,\dots,m \} =\gamma + \max\{\langle v_k^*, v \rangle  \mid k=1,\dots,m \}.
\end{equation*}   
Since $v \in  0^{+}(D \cap {\rm dom}\,f)$, one has $v \in  0^{+}({\rm dom}\,f)$. So, by Lemma~\ref{rep_rec_gpcf}, $$\max \left\{ \langle v_k^*, v \rangle\mid k=1,\dots,m \right\}=f0^{+}(v).$$ Then, for every $x \in D \cap {\rm dom}\,f$ we have $f(x) \geq \gamma + f0^{+}(v) \geq \gamma$, where the last inequality holds because $v \in  0^{+}(D) $. Thus, by Theorem~\ref{FW_exist_thm}, ${\rm (}{\mathcal{P}}{\rm )}$ has a solution. $\hfill\Box$

\begin{remark} If $f0^{+}(v) \geq 0$ for every $v \in 0^+D$, then one says that the functional $f0^{+}$ is {\it copositive} on the recession cone $0^+D$. We call Theorem~\ref{E_exist_thm} an Eaves-type existence theorem in generalized polyhedral convex optimization to trace back Eaves' idea \cite[p.~702]{Eaves_1971} (see also \cite[Theorem~2.2]{Lee_Tam_Yen_2005}) in using recession cones for a solution existence theorem in quadratic programming. In the special case where $f$ is linear on $X$, the result in Theorem~\ref{E_exist_thm} has been obtained in \cite[Theorem~3.1]{Luan_Yen_2015}.
\end{remark}

We now give an explicit criterion for ${\rm (}{\mathcal{P}}{\rm )}$ to have a solution.
\begin{theorem}\label{Exist_thm_3} Let $D$ be given by \eqref{eq_def_gpcs_2}, the function $f$ be defined by \eqref{eq_rep_gcpf} with ${\rm dom}\, f$ be given by \eqref{rep_domf}. Suppose that $D \cap {\rm dom}\,f$ is nonempty. Then ${\rm (}{\mathcal{P}}{\rm )}$ has a solution if and only if 
	\begin{equation}\label{eq_exist_thm_3}\begin{array}{rcl}
	0 \in {\rm conv}\,\{v_k^* \mid  k=1,\dots,m\}&+&{\rm cone}\,\{u^*_j \mid  j=1,\dots,q \}\\
	& + & {\rm cone}\,\{x^*_i  \mid  i=1,\dots,p\}+ ({\rm ker}\,A \cap {\rm ker}\,B)^{\perp}.
	\end{array}
		\end{equation}
\end{theorem}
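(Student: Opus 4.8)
The plan is to derive \eqref{eq_exist_thm_3} from the Eaves-type criterion of Theorem~\ref{E_exist_thm} by converting the copositivity of $f0^+$ on $0^+D$ into a theorem of the alternative. First I would invoke Theorem~\ref{E_exist_thm} (applicable since $D\cap{\rm dom}\,f\neq\emptyset$): $(\mathcal P)$ has a solution if and only if $f0^+(v)\ge 0$ for every $v\in 0^+D$. Applying Remark~\ref{rec_cone_gpcs} to $D$ (given by \eqref{eq_def_gpcs_2}) and to ${\rm dom}\,f$ (given by \eqref{rep_domf}), together with the explicit formula of Lemma~\ref{rep_rec_gpcf}, the values $f0^+(v)=+\infty$ for $v\notin 0^+({\rm dom}\,f)$ are harmless, so the criterion collapses to the condition
$$\text{(A):}\quad \Big[v\in K,\ \langle x_i^*,v\rangle\le 0\ (\forall i\in I),\ \langle u_j^*,v\rangle\le 0\ (\forall j\in J)\Big]\ \Longrightarrow\ \max_{1\le k\le m}\langle v_k^*,v\rangle\ge 0,$$
where $K:=\ker A\cap\ker B$. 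It then remains to prove that (A) is equivalent to the membership $0\in S$, where $S:=\mathrm{conv}\{v_k^*\}+\mathrm{cone}\{u_j^*\}+\mathrm{cone}\{x_i^*\}+K^\perp$ is exactly the right-hand side of \eqref{eq_exist_thm_3}.

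The implication $0\in S\Rightarrow(\mathrm A)$ is a direct pairing argument, which I would carry out first. Writing $0=\sum_k\lambda_k v_k^*+\sum_j\mu_j u_j^*+\sum_i\nu_i x_i^*+w^*$ with $\lambda_k\ge 0$, $\sum_k\lambda_k=1$, $\mu_j,\nu_i\ge 0$, and $w^*\in K^\perp$, I would test this identity against any $v$ admissible in (A): the term $\langle w^*,v\rangle$ vanishes because $v\in K$, while $\sum_j\mu_j\langle u_j^*,v\rangle\le 0$ and $\sum_i\nu_i\langle x_i^*,v\rangle\le 0$. Hence $\sum_k\lambda_k\langle v_k^*,v\rangle\ge 0$, and being a convex combination this forces $\max_k\langle v_k^*,v\rangle\ge 0$.

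The converse $(\mathrm A)\Rightarrow 0\in S$ is the heart of the matter, and I would establish its contrapositive by separation. Equip $X^*$ with the weak$^*$ topology, so that $(X^*)^*=X$. The crucial structural fact is that $S$ is a generalized polyhedral convex set, hence weak$^*$-closed: it is the Minkowski sum of the polytope $\mathrm{conv}\{v_k^*\}$, the finitely generated cone $\mathrm{cone}\{u_j^*,x_i^*\}$, and the weak$^*$-closed subspace $K^\perp$, so the representation theorem for generalized convex polyhedra \cite[Theorem~2.7]{Luan_Yen_2015}, read in its converse direction, applies. Assuming $0\notin S$, strong separation of the point $0$ from the closed convex set $S$ yields $v\in X$ and $\beta>0$ with $\langle s^*,v\rangle\ge\beta$ for all $s^*\in S$. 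Testing against the recession directions of $S$ then forces $\langle u_j^*,v\rangle\ge 0$, $\langle x_i^*,v\rangle\ge 0$, and $\langle w^*,v\rangle=0$ for all $w^*\in K^\perp$; the last condition says $v$ annihilates $K^\perp$, so by the bipolar theorem (as $K$ is a closed subspace) one gets $v\in K$. Testing against the vertices $v_k^*\in S$ gives $\langle v_k^*,v\rangle\ge\beta>0$. Finally, replacing $v$ by $-v\in K$ produces a vector with $\langle x_i^*,-v\rangle\le0$, $\langle u_j^*,-v\rangle\le0$ and $\max_k\langle v_k^*,-v\rangle\le-\beta<0$, i.e.\ a witness to the failure of (A). This completes the contrapositive, and combining both implications with the reduction to (A) proves the theorem.

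The step I expect to be the main obstacle is the closedness of $S$: strong separation delivers a clean alternative only because $S$ is weak$^*$-closed, and this is precisely where the finite generation of the polytope-plus-cone part must be combined with the closedness of the subspace $K^\perp$ through the generalized-polyhedral representation theorem. The remaining functional-analytic inputs—the identification $(X^*)^*=X$ in the weak$^*$ topology and the bipolar identity recovering $v\in K$ from annihilation of $K^\perp$—are standard and I would treat them briefly.
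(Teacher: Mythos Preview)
Your proof is correct. The necessity argument---separation of $0$ from the weak$^*$-closed gpcs $S$, extraction of sign conditions from the recession cone $0^+S$, and the bipolar identity $(K^\perp)^\perp=K$---is essentially identical to the paper's. The organization differs only in the sufficiency direction: you route both implications through the Eaves-type criterion (Theorem~\ref{E_exist_thm}), reducing the whole statement to the alternative $(\mathrm A)\Leftrightarrow 0\in S$ and then verifying $0\in S\Rightarrow(\mathrm A)$ by pairing against the recession function $f0^+$. The paper instead handles sufficiency directly via the Frank--Wolfe-type theorem (Theorem~\ref{FW_exist_thm}): it uses the same convex-combination identity $0=\sum\lambda_k v_k^*+\sum\mu_{1,i}x_i^*+\sum\mu_{2,j}u_j^*+u^*$ but applies it to $f$ itself to produce an explicit lower bound on $D$, rather than to $f0^+$. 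Your route is a little more uniform (one reduction, then a clean two-sided alternative); the paper's avoids the preliminary passage to $(\mathrm A)$. Both rely on exactly the same structural ingredients, and your identification of the closedness of $S$ (as a gpcs via \cite[Theorem~2.7]{Luan_Yen_2015}) as the crux of the separation step matches the paper precisely.
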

\begin{proof} First, suppose that \eqref{eq_exist_thm_3} is fulfilled. Then, there exist  nonnegative numbers $\lambda_1,\dots, \lambda_m,$ $\mu_{1,1}, \dots, \mu_{1,p}$, $\mu_{2,1}, \dots, \mu_{2,q}$, and an element $u^* \in ({\rm ker}\,A \cap {\rm ker}\,B)^{\perp}$ such that $\sum\limits_{k=1}^m \lambda_k=1$ and
$$\sum\limits_{k=1}^m \lambda_k v_k^*+\sum\limits_{i=1}^p \mu_{1,i} x_i^* + \sum\limits_{j=1}^q \mu_{2,j} u_j^* + u^*=0.$$ Select any $x_0$ from $D \cap {\rm dom}\,f$. For every $x \in D \cap {\rm dom}\,f$, one has
\begin{equation*}
\begin{aligned}
f(x)&=\max \left\{ \langle v_{\ell}^*, x \rangle + \beta_{\ell} \mid \ell=1,\dots,m  \right\}\\
&=\left(\sum\limits_{k=1}^m \lambda_k\right) \max \left\{ \langle v_{\ell}^*, x \rangle + \beta_{\ell} \mid \ell=1,\dots,m  \right\}\\
&=\sum\limits_{k=1}^m \big(\lambda_k  \max \left\{ \langle v_{\ell}^*, x \rangle + \beta_{\ell} \mid \ell=1,\dots,m  \right\}\big)\\
&\geq \sum\limits_{k=1}^m \lambda_k [\langle v_k^*, x \rangle + \beta_k]= \left\langle \sum\limits_{k=1}^m \lambda_k  v_k^*, x \right\rangle + \sum\limits_{k=1}^m \lambda_k \beta_k\\
&= \left\langle -\left(\sum\limits_{i=1}^p \mu_{1,i} x_i^* + \sum\limits_{j=1}^q \mu_{2,j} u_j^* + u^*\right), x \right\rangle + \sum\limits_{k=1}^m \lambda_k \beta_k\\
&=-\sum\limits_{i=1}^p \mu_{1,i} \langle  x_i^*, x  \rangle - \sum\limits_{j=1}^q \mu_{2,j} \langle  u_j^*, x  \rangle - \langle  u^*, x_0  \rangle +\langle  u^*, x_0 -x \rangle + \sum\limits_{k=1}^m \lambda_k \beta_k\\
&\geq -\sum\limits_{i=1}^p \mu_{1,i} \alpha_i - \sum\limits_{j=1}^q \mu_{2,j} \gamma_j - \langle  u^*, x_0  \rangle +0 + \sum\limits_{k=1}^m \lambda_k \beta_k.
\end{aligned}
\end{equation*}
Hence, $f$ is bounded from below on $D$. Invoking Theorem~\ref{FW_exist_thm}, we conclude that ${\rm (}{\mathcal{P}}{\rm )}$ has a solution. Thus, \eqref{eq_exist_thm_3} implies the solution existence of ${\rm (}{\mathcal{P}}{\rm )}$.

To complete the proof, it suffices to show that if  \eqref{eq_exist_thm_3} does not hold, then ${\rm (}{\mathcal{P}}{\rm )}$ has no solutions. Suppose that $0 \notin Q$, where $Q$ denotes the set on the right-hand side of \eqref{eq_exist_thm_3}. By  \cite[Theorem~2.7]{Luan_Yen_2015}, the nonempty set $Q$ is generalized polyhedral convex. Hence, $Q$ is convex and weakly$^*$-closed. Since $\{0\} \cap Q=~\emptyset$, by the strong separation theorem \cite[Theorem~3.4(b)]{Rudin_1991} one can find $v \in X$ and $\gamma \in \mathbb{R}$ such that 
\begin{equation}\label{separation_Q}
\sup\{\langle x^*,v \rangle \mid  x^* \in Q\} < \gamma <\langle 0,v \rangle.
\end{equation}
\hskip 0.5cm On one hand, \eqref{separation_Q} assures that the linear functional $\langle \cdot,v\rangle $ is bounded from above on~$Q$. Hence, according to \cite[Theorem~3.3]{Luan_Yen_2015}, the generalized linear programming problem
$\max\{\langle x^*,v \rangle \mid  x^* \in Q\}$ has a solution. Therefore, by \cite[Proposition~3.5]{Luan_Yen_2015}, one has  $\langle v^*, v \rangle \leq 0$ for every vector $v^*$ from the recession cone $0^+Q$ of $Q$. As \eqref{eq_exist_thm_3} yields
$$0^+Q={\rm cone}\,\{u^*_j \mid  j=1,\dots,q \} + {\rm cone}\,\{x^*_i  \mid  i=1,\dots,p\}+ ({\rm ker}\,A \cap {\rm ker}\,B)^{\perp},$$  
one gets $\langle x^*_i, v \rangle \leq 0$ for all $i\in \{1,\dots,p\}$, $\langle u^*_j, v \rangle \leq 0$ for every $j \in \{1,\dots,q\}$, and $v \in (({\rm ker}\,A \cap {\rm ker}\,B)^{\perp})^{\perp}$.
Since the linear subspace ${\rm ker}\,A \cap  {\rm ker}\,B$ is closed, by using \cite[Proposition~2.40]{Bonnans_Shapiro_2000} one has $$(({\rm ker}\,A \cap {\rm ker}\,B)^{\perp})^{\perp}={\rm ker}\,A \cap  {\rm ker}\,B.$$  Hence, applying  Remark~\ref{rec_cone_gpcs} simultaneously to $D$ and ${\rm dom}\,f$, we obtain $v \in 0^{+}D$ and $v \in 0^{+}({\rm dom}\,f)$. So, by the second inclusion and by Lemma~\ref{rep_rec_gpcf},  
\begin{equation}\label{f0_max}
f0^{+}(v)=\max \left\{ \langle v_k^*, v \rangle \mid k=1,\dots,m \right\}.
\end{equation}
\hskip 0.5cm On the other hand, for each $k=1,\dots,m$, since $v_k^* \in Q$,  the inequalities in \eqref{separation_Q} yield $\langle v_k^*, v \rangle < \gamma <0$. So, from \eqref{f0_max} it follows that $ f0^{+}(v) < 0$. Hence ${\rm Sol}{\rm (}{\mathcal{P}}{\rm )} = \emptyset$ by Theorem~\ref{E_exist_thm}. 

The proof is complete. $\hfill\Box$
\end{proof}

\begin{corollary}\label{cor_existence} In the notations of Theorem \ref{Exist_thm_3}, suppose that ${\rm dom}\,f \subset D$. Then, ${\rm (}{\mathcal{P}}{\rm )}$ has a solution if and only if 
	\begin{equation*}
	\begin{aligned}
	0 \in {\rm conv}\,\{v_k^* \mid  k=1,\dots,m\}+{\rm cone}\,\{u^*_j \mid  j=1,\dots,q \}+ ({\rm ker}\,B)^{\perp}.
	\end{aligned}
	\end{equation*}
\end{corollary}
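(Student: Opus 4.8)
The plan is to reduce ${\rm (}{\mathcal{P}}{\rm )}$ to an unconstrained minimization of $f$ and then to invoke Theorem~\ref{Exist_thm_3} with the trivial constraint set $X$.

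First I would observe that the hypothesis ${\rm dom}\,f \subset D$ forces the feasible points having finite objective value to be exactly the points of ${\rm dom}\,f$. Concretely, consider the auxiliary problem ${\rm (}{\mathcal{P}}'{\rm )}$ of minimizing $f$ over the whole space $X$, and let me check that ${\rm Sol}{\rm (}{\mathcal{P}}{\rm )}={\rm Sol}{\rm (}{\mathcal{P}}'{\rm )}$. Indeed, any solution $u$ of ${\rm (}{\mathcal{P}}'{\rm )}$ lies in ${\rm dom}\,f \subset D$ and satisfies $f(u) \leq f(x)$ for all $x \in X$, hence for all $x \in D$, so $u$ solves ${\rm (}{\mathcal{P}}{\rm )}$. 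Conversely, any solution $u$ of ${\rm (}{\mathcal{P}}{\rm )}$ has $u \in {\rm dom}\,f$ by the finiteness of $f(u)$, and $f(u) \leq f(x)$ for every $x \in D \supset {\rm dom}\,f$; since $f(x)=+\infty \geq f(u)$ whenever $x \notin {\rm dom}\,f$, we obtain $f(u) \leq f(x)$ for all $x \in X$, so $u$ solves ${\rm (}{\mathcal{P}}'{\rm )}$. In particular, ${\rm (}{\mathcal{P}}{\rm )}$ has a solution if and only if ${\rm (}{\mathcal{P}}'{\rm )}$ does.

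Next I would apply Theorem~\ref{Exist_thm_3} to ${\rm (}{\mathcal{P}}'{\rm )}$. Its constraint set is $X$, which is a polyhedral convex set; following the convention recorded after \eqref{eq_def_gpcs_2}, it admits a representation of the form \eqref{eq_def_gpcs_2} with $Y=\{0\}$, $A\equiv 0$, $y=0$, and with no inequality constraints (that is, $p=0$), while ${\rm dom}\,f$ retains its representation \eqref{rep_domf}. Since $X \cap {\rm dom}\,f={\rm dom}\,f \neq \emptyset$ (as $f$ is proper), the theorem applies. In the condition \eqref{eq_exist_thm_3} the term ${\rm cone}\,\{x^*_i \mid i=1,\dots,p\}$ then reduces to $\{0\}$, and because ${\rm ker}\,A=X$ one has ${\rm ker}\,A \cap {\rm ker}\,B={\rm ker}\,B$, whence $({\rm ker}\,A \cap {\rm ker}\,B)^{\perp}=({\rm ker}\,B)^{\perp}$.

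Combining the two steps, ${\rm (}{\mathcal{P}}{\rm )}$ has a solution if and only if ${\rm (}{\mathcal{P}}'{\rm )}$ does, if and only if the reduced form of \eqref{eq_exist_thm_3} holds, which is precisely the asserted condition $0 \in {\rm conv}\,\{v_k^* \mid k=1,\dots,m\}+{\rm cone}\,\{u^*_j \mid j=1,\dots,q\}+({\rm ker}\,B)^{\perp}$. I do not anticipate a serious obstacle: the only points needing care are verifying that ${\rm Sol}{\rm (}{\mathcal{P}}{\rm )}$ and ${\rm Sol}{\rm (}{\mathcal{P}}'{\rm )}$ genuinely coincide (handled by the finiteness-of-objective argument built on ${\rm dom}\,f \subset D$) and the routine bookkeeping that the $x^*_i$-cone and the ${\rm ker}\,A$-annihilator collapse under the trivial representation of $X$.
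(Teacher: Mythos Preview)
Your proposal is correct and follows essentially the same approach as the paper: both reduce ${\rm (}{\mathcal{P}}{\rm )}$ to an equivalent problem with a trivial constraint and then invoke Theorem~\ref{Exist_thm_3}. The only cosmetic difference is that the paper takes the reduced constraint set to be ${\rm dom}\,f$ (so the constraint data duplicate the domain data and collapse), whereas you take it to be $X$ (so the constraint data vanish outright); either choice yields the stated condition immediately.
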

\begin{proof} As ${\rm dom}\,f \subset D$,  ${\rm (}{\mathcal{P}}{\rm )}$ is equivalent to the problem $\min \left\{ f(x) \mid x \in {\rm dom}\, f\right\}$. Hence, applying Theorem \ref{Exist_thm_3} the latter, we obtain the assertion. $\hfill\Box$	
\end{proof}	

\begin{corollary}\label{solution_existence_unconstrained} Suppose that $D=X$ and $f$ is given by \eqref{eq_rep_gcpf} with ${\rm dom}\,f=X$. Then ${\rm (}{\mathcal{P}}{\rm )}$ has a solution if and only if $0 \in {\rm conv}\, \{v_k^* \mid k=1,\dots,m\}.$ 
\end{corollary}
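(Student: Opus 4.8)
The plan is to obtain this corollary as a direct specialization of Corollary~\ref{cor_existence}. Under the hypotheses $D = X$ and ${\rm dom}\,f = X$, the inclusion ${\rm dom}\,f \subset D$ holds trivially, so Corollary~\ref{cor_existence} applies to $(\mathcal{P})$ and reduces the solvability question to the membership
$$0 \in {\rm conv}\,\{v_k^* \mid k=1,\dots,m\} + {\rm cone}\,\{u^*_j \mid j=1,\dots,q\} + ({\rm ker}\,B)^{\perp}.$$
It therefore remains only to check that, under the present hypotheses, the last two summands collapse to $\{0\}$.

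To do this I would first fix a convenient representation~\eqref{rep_domf} of ${\rm dom}\,f$. Since $f$ is the maximum of finitely many continuous affine functions on all of $X$, its epigraph is the intersection of finitely many closed half-spaces in $X \times \mathbb{R}$; hence $f$ is polyhedral convex and ${\rm dom}\,f = X$ is a polyhedral convex set. By the convention recorded just after~\eqref{rep_domf}, we may thus take $Z = \{0\}$, $B \equiv 0$, $z = 0$, and impose no inequality constraints, i.e. $q = 0$. With this choice the empty family yields ${\rm cone}\,\{u^*_j \mid j=1,\dots,q\} = \{0\}$, while $B \equiv 0$ gives ${\rm ker}\,B = X$ and hence $({\rm ker}\,B)^{\perp} = X^{\perp} = \{0\}$, the zero functional being the only continuous linear functional vanishing on all of $X$. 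Substituting these into the criterion of Corollary~\ref{cor_existence} leaves exactly $0 \in {\rm conv}\,\{v_k^* \mid k=1,\dots,m\}$, which is the claimed equivalence.

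Since everything follows from Corollary~\ref{cor_existence} by substitution, I do not expect a genuine obstacle; the only point deserving a word of care is the legitimacy of the degenerate representation of ${\rm dom}\,f = X$ with $B \equiv 0$ and $q = 0$. If one prefers to avoid that step, the same conclusion can be reached by applying Theorem~\ref{Exist_thm_3} directly: taking $A \equiv 0$, $y = 0$, $p = 0$ to describe $D = X$ as well, each of ${\rm cone}\,\{x^*_i\}$, ${\rm cone}\,\{u^*_j\}$, and $({\rm ker}\,A \cap {\rm ker}\,B)^{\perp}$ reduces to $\{0\}$, and criterion~\eqref{eq_exist_thm_3} again becomes $0 \in {\rm conv}\,\{v_k^* \mid k=1,\dots,m\}$.
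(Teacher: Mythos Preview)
Your proof is correct and follows essentially the same route as the paper: invoke Corollary~\ref{cor_existence} after choosing the degenerate representation $Z=\{0\}$, $B\equiv 0$, $z=0$, $q=0$ for ${\rm dom}\,f=X$, which collapses the extra summands to $\{0\}$. The paper's argument is identical in substance, just stated more tersely.
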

\begin{proof} Since ${\rm dom}\,f=X$, we can choose $Z=\{0\}$, $B\equiv 0$, $z=0$ and $q=0$. Therefore, by using Corollary \ref{cor_existence} we obtain the assertion.$\hfill\Box$	
\end{proof}	

Next, we will describe the solution set of ${\rm (}{\mathcal{P}}{\rm )}$.
\begin{proposition} ${\rm Sol}{\rm (}{\mathcal{P}}{\rm )}$ is a generalized polyhedral convex set. If $D$ and ${\rm dom}\, f$ are polyhedral convex, so is ${\rm Sol}{\rm (}{\mathcal{P}}{\rm )}$.
\end{proposition}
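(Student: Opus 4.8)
The plan is to turn the solution set into an explicit finite system of affine constraints and then read off the conclusion directly from the definition of a gpcs. First I would dispose of the trivial case: if ${\rm Sol}{\rm (}{\mathcal{P}}{\rm )}=\emptyset$, there is nothing to prove, since the empty set is vacuously a generalized polyhedral convex set (and a polyhedral convex set). So assume ${\rm Sol}{\rm (}{\mathcal{P}}{\rm )}\neq\emptyset$ and fix a solution $u$. By the very definition of a solution of ${\rm (}{\mathcal{P}}{\rm )}$, the value $\bar{\gamma}:=f(u)$ is finite and equals the optimal value, so $f(x)\geq\bar{\gamma}$ for all $x\in D$.

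The key observation is that a point $x\in D$ is a solution precisely when $x\in{\rm dom}\,f$ and $f(x)\leq\bar{\gamma}$: any $x\notin{\rm dom}\,f$ has $f(x)=+\infty$, while for $x\in D\cap{\rm dom}\,f$ the inequality $f(x)\leq\bar{\gamma}$ forces equality because $\bar{\gamma}$ is the minimum. Using the representation \eqref{eq_rep_gcpf} of $f$, the condition $f(x)\leq\bar{\gamma}$ on ${\rm dom}\,f$ is equivalent to the $m$ continuous affine inequalities $\langle v_k^*, x\rangle\leq\bar{\gamma}-\beta_k$, $k=1,\dots,m$. Substituting the representation \eqref{eq_def_gpcs_2} of $D$ and the representation \eqref{rep_domf} of ${\rm dom}\,f$, I obtain
\begin{equation*}
{\rm Sol}{\rm (}{\mathcal{P}}{\rm )}=\left\{x\in X \mid Ax=y,\ Bx=z,\ \langle x_i^*, x\rangle\leq\alpha_i\ (i\in I),\ \langle u_j^*, x\rangle\leq\gamma_j\ (j\in J),\ \langle v_k^*, x\rangle\leq\bar{\gamma}-\beta_k\ (k=1,\dots,m)\right\}.
\end{equation*}

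To recognize the right-hand side as a gpcs in the sense of \eqref{eq_def_gpcs}, note that the two equality constraints define the set $L:=\{x\in X\mid Ax=y\}\cap\{x\in X\mid Bx=z\}$, which, being the intersection of two closed affine subspaces (preimages of points under the continuous maps $A$ and $B$), is itself a closed affine subspace of $X$. Together with the $p+q+m$ closed half-spaces, this exhibits ${\rm Sol}{\rm (}{\mathcal{P}}{\rm )}$ in the form \eqref{eq_def_gpcs}, so it is a generalized polyhedral convex set. For the second assertion, if $D$ and ${\rm dom}\,f$ are polyhedral convex, then by the conventions recorded in Section~2 (one may take $Y=\{0\}$, $A\equiv0$, $y=0$ for $D$, and $Z=\{0\}$, $B\equiv0$, $z=0$ for ${\rm dom}\,f$) the equality constraints drop out, and the displayed description uses only finitely many closed half-spaces, i.e. $L=X$; hence ${\rm Sol}{\rm (}{\mathcal{P}}{\rm )}$ is polyhedral convex.

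I do not expect any serious obstacle: the argument is essentially a matter of assembling the existing representations of $f$, $D$, and ${\rm dom}\,f$. The only points requiring care are the reduction of the equality $f(x)=\bar{\gamma}$ to the single inequality $f(x)\leq\bar{\gamma}$, which rests on optimality of $\bar{\gamma}$ and the max-representation \eqref{eq_rep_gcpf}, and the verification that the combined equality system still defines a \emph{closed} affine subspace — both of which are immediate from the continuity of $A$ and $B$.
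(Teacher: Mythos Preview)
Your proposal is correct and follows essentially the same approach as the paper: dispose of the empty case, pick a solution to name the optimal value $\bar{\gamma}$, rewrite ${\rm Sol}{\rm (}{\mathcal{P}}{\rm )}$ as $\{x\in D\cap{\rm dom}\,f\mid f(x)\le\bar{\gamma}\}$, and then use the max-representation \eqref{eq_rep_gcpf} to replace $f(x)\le\bar{\gamma}$ by the $m$ affine inequalities $\langle v_k^*,x\rangle+\beta_k\le\bar{\gamma}$. The paper stops at the description ${\rm Sol}{\rm (}{\mathcal{P}}{\rm )}=\{x\in D\cap{\rm dom}\,f\mid \langle v_k^*,x\rangle+\beta_k\le\bar{\gamma},\ k=1,\dots,m\}$ and invokes the fact that $D\cap{\rm dom}\,f$ is a gpcs (resp.\ pcs), whereas you go one step further and substitute the explicit defining data of $D$ and ${\rm dom}\,f$; this is a cosmetic difference, not a different argument.
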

\begin{proof} If ${\rm Sol}{\rm (}{\mathcal{P}}{\rm )}$ is empty, then the claim is trivial. If ${\rm Sol}{\rm (}{\mathcal{P}}{\rm )}$ is nonempty, select a point $\bar{x} \in {\rm Sol}{\rm (}{\mathcal{P}}{\rm )}$ and put $ \bar{\gamma}=f(\bar{x})$. Then, $f(x) \geq \bar\gamma$ for every $x \in D$. With $f$ being given by \eqref{eq_rep_gcpf}, one has
\begin{equation}\label{solution_set}
\begin{aligned}
{\rm Sol}{\rm (}{\mathcal{P}}{\rm )}&=\{x \in D \mid  f(x) =\bar{\gamma}\}=\{x \in D \mid  f(x) \leq \bar{\gamma}\}\\
&=\{x \in D \cap {\rm dom}\, f \mid  f(x) \leq \bar{\gamma}\}\\
&=\{x \in D \cap {\rm dom}\, f \mid \langle v_k^*, x \rangle + \beta_k \leq \bar{\gamma},\; k=1,\dots,m\}.
\end{aligned}
\end{equation}
Since ${\rm dom}\,f $ is a generalized polyhedral convex set (see Theorem~\ref{properties_gpcf}), this implies that ${\rm Sol}{\rm (}{\mathcal{P}}{\rm )}$ is a generalized polyhedral convex set. In the case where $D$ and ${\rm dom}\,f$ are polyhedral convex, \eqref{solution_set} shows that ${\rm Sol}{\rm (}{\mathcal{P}}{\rm )}$ is a pcs. $\hfill\Box$
\end{proof}

The following example is an illustration for our results in this section.

\begin{example}{\rm (Cf. \cite[Example 1]{Luan_2016_Acta} and \cite[Example 2.8]{Luan_Yen_2015})} \label{Example1_gpcp}
Let $X=C[-1, 1]$ be the Banach space of continuous real-valued functions defined on $[-1, 1]$ with the norm $||x||=\max\limits_{t \in [-1, 1]} |x(t)|.$ By the Riesz representation theorem (see, e.g., \cite[Theorem~6,~p.~374]{Kolmogorov_Fomin_1975} and \cite[Theorem~1, p.~113]{Luenberger_1969}), the dual space of $X$ is $X^*=NBV[-1, 1]$, the space of \textit{functions of bounded variation} on $[-1, 1]$, i.e., functions $y: [-1, 1] \rightarrow \mathbb{R}$ of bounded variation, $y(-1)=0$, and $y(\cdot)$ is continuous from the left at every point of $(-1, 1)$. To construct a generalized polyhedral convex optimization problem on $X$, we first define the elements $x^*_1,\, x^*_2 \in X^*$ by setting 
\begin{equation}
\label{integrals_1_2}\langle x^*_i, x \rangle =\int\limits_{-1}^1 t^i x(t)dt\quad (i=1,2),
\end{equation} 
where the integrals are Riemannian. For each index $i\in\{1,2\}$, the corresponding integral in \eqref{integrals_1_2} is equal to the Riemann-Stieltjes integral $\int\limits_{-1}^1 x(t)dy_i(t)$, which is given by the $C^1$-smooth functions $y_i(t)=\int\limits_{-1}^t \tau^i d \tau$ (see, e.g., \cite[p.~367]{Kolmogorov_Fomin_1975}). Consider~$X$ with the \textit{weak topology}. Then $X$ is a locally convex Hausdorff topological vector space whose topology is much weaker than the norm topology. Clearly, $X_0:={\rm ker}\,x^*_1 \cap {\rm ker}\,x^*_2$ is a closed linear subspace of~$X$. Let 
\begin{equation*}
		e_1(t)=\begin{cases}
		0 & \text{if} \ \, t \in [-1,0]\\
		-60t^2+48t   & \text{if} \ \, t \in [0,1], \\ 
		\end{cases}	
\end{equation*} 
and
\begin{equation*} 
		e_2(t)=\begin{cases}
		0 & \text{if} \ \, t \in [-1,0]\\
		80t^2-60t   & \text{if} \ \, t \in [0,1]. \\ 
		\end{cases}
\end{equation*}
We have $e_1, e_2\in X$, $\langle x^*_1,e_1 \rangle= \langle x^*_2,e_2 \rangle=1,$ and $\langle x^*_1,e_2 \rangle= \langle x^*_2,e_1 \rangle=0$. For any $x \in X$, put $t_i=\langle x^*_i,x \rangle$ for $i=1,2$, and observe that the vector $x_0:=x-t_1e_1-t_2e_2$ belongs to~$X_0$. Conversely, if $x=x_0+t_1e_1+t_2e_2$, with $x_0 \in X_0$ and $t_1, t_2 \in \mathbb{R}$, then
$$\langle x^*_i, x \rangle = \langle x^*_i, x_0 \rangle + t_1\langle x^*_i, e_1 \rangle+ t_2\langle x^*_i, e_2 \rangle=t_i \quad(i=1,2).$$ Therefore, for any $x \in X$, there exists a unique tube $(x_0, t_1, t_2) \in X_0 \times \mathbb{R} \times \mathbb{R}$ such that $x=x_0+t_1e_1+t_2e_2$. Given any $e_0 \in X_0$ and put $L=\left\{x \in X \mid x(t)=e_0(t),\, t \in [-1,0] \right\}$. Clearly, $L$ is a closed affine subspace of $X$. Consider ${\rm (}{\mathcal{P}}{\rm )}$ with the constraint set $$D:=\left\{x \in L \mid \langle x^*_1, x \rangle \leq 1, \; \langle x^*_2, x \rangle \leq 2\right\}.$$ Observe that $D$ is a gpcs, $e_0 + e_2 \in D$, and $D$ is not a polyhedral convex set in $X$. To define the objective function, choose $v^*_1=x^*_1-x^*_2$, $v^*_2=-x^*_1-x^*_2$, and put 
		\begin{equation*}
		f(x)=\max\{\langle v^*_1, x \rangle+1, \langle v^*_2, x \rangle\} \quad (x \in X).
		\end{equation*}
For any $x \in D$, we have
		\begin{equation*}
		\begin{aligned}
		f(x) &\geq \frac{1}{2}[\langle v^*_1, x \rangle+1] + \frac{1}{2}\langle v^*_2, x \rangle\\
		&=\frac{1}{2} \langle v^*_1+v^*_2, x \rangle +  \frac{1}{2}=\langle -x^*_2, x \rangle +  \frac{1}{2}\geq -\frac{3}{2}.
		\end{aligned}
		\end{equation*}
Thus, by Theorem \ref{FW_exist_thm}, ${\rm (}{\mathcal{P}}{\rm )}$ has a solution. 
\end{example}

\section{Optimality Conditions}
\setcounter{equation}{0}  

We now obtain some optimality conditions for ${\rm (}{\mathcal{P}}{\rm )}$.
\begin{theorem}\label{Optimality_condition_1} {\rm (Optimality condition I)} A vector $x \in D \cap {\rm dom}\,f$ is a solution of ${\rm (}{\mathcal{P}}{\rm )}$ if and only if 
	\begin{equation}\label{eq_optim_cond}
	0 \in \overline{\partial f(x) + N_D(x)}.
	\end{equation}
\end{theorem}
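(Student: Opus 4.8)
The plan is to reformulate $(\mathcal{P})$ as the unconstrained minimization over $X$ of $g:=f+\delta(\cdot,D)$ and then to invoke the Fermat rule. Since $x\in D\cap{\rm dom}\,f$, the function $g$ is proper convex with $x\in{\rm dom}\,g$, and $x$ solves $(\mathcal{P})$ exactly when $g(x)\le g(u)$ for all $u\in X$; by the very definition of the subdifferential this is equivalent to $0\in\partial g(x)$. Everything thus reduces to showing that $\partial g(x)=\overline{\partial f(x)+N_D(x)}$. Observe that one cannot apply Lemma~\ref{sum_rules} directly with $f_1=f$ or $f_1=\delta(\cdot,D)$, because in general neither $f$ nor $\delta(\cdot,D)$ is polyhedral convex.

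To circumvent this I would peel off the polyhedral part of $f$. Put $h(x):=\max\{\langle v_k^*,x\rangle+\beta_k\mid k=1,\dots,m\}$ for all $x\in X$; then $h$ is polyhedral convex (a finite maximum of continuous affine functions, with ${\rm dom}\,h=X$) and $f=h+\delta(\cdot,{\rm dom}\,f)$, so that $g=h+\delta(\cdot,D\cap{\rm dom}\,f)$. As $D\cap{\rm dom}\,f$ is a generalized polyhedral convex set, Lemma~\ref{sum_rules}(a) applies and gives
\begin{equation*}
\partial g(x)=\overline{\partial h(x)+N_{D\cap{\rm dom}\,f}(x)}.
\end{equation*}

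It remains to identify this closure with $\overline{\partial f(x)+N_D(x)}$, which I would do via the explicit formulas \eqref{rep_subd} and \eqref{norm_cone_gpcs}. Applying \eqref{rep_subd} to $h$ (whose domain is $X$) gives $\partial h(x)={\rm conv}\{v_k^*\mid k\in\Theta(x)\}$, and describing $D\cap{\rm dom}\,f$ through the combined linear map $(A,B)$ and the inequalities indexed by $\{x_i^*\}\cup\{u_j^*\}$, formula \eqref{norm_cone_gpcs} yields
\begin{equation*}
N_{D\cap{\rm dom}\,f}(x)={\rm cone}\{x_i^*\mid i\in I(x)\}+{\rm cone}\{u_j^*\mid j\in J(x)\}+({\rm ker}\,A\cap{\rm ker}\,B)^{\perp}.
\end{equation*}
By \eqref{rep_subd} and \eqref{norm_cone_gpcs}, the set $\partial f(x)+N_D(x)$ consists of the same ${\rm conv}$ and ${\rm cone}$ terms, but with the subspace summand $({\rm ker}\,A)^{\perp}+({\rm ker}\,B)^{\perp}$ in place of $({\rm ker}\,A\cap{\rm ker}\,B)^{\perp}$.

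Hence the argument comes down to the functional-analytic identity $({\rm ker}\,A\cap{\rm ker}\,B)^{\perp}=\overline{({\rm ker}\,A)^{\perp}+({\rm ker}\,B)^{\perp}}$ for the closed subspaces ${\rm ker}\,A$ and ${\rm ker}\,B$: the inclusion $\supseteq$ is immediate, and $\subseteq$ follows from the bipolar relation $(({\rm ker}\,A\cap{\rm ker}\,B)^{\perp})^{\perp}={\rm ker}\,A\cap{\rm ker}\,B$ established, as in the proof of Theorem~\ref{Exist_thm_3}, through \cite[Proposition~2.40]{Bonnans_Shapiro_2000}. Combining this with $S+\overline{T}\subseteq\overline{S+T}$ (continuity of addition) shows that the two sets displayed above share the same closure, so $\partial g(x)=\overline{\partial f(x)+N_D(x)}$ and the theorem follows. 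I expect the genuine obstacle to be precisely this closure bookkeeping, since the annihilator sum $({\rm ker}\,A)^{\perp}+({\rm ker}\,B)^{\perp}$ need not be weak$^*$-closed; this is exactly why the closure in \eqref{eq_optim_cond} is essential and cannot, in general, be removed.
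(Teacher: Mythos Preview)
Your proof is correct, and the overall skeleton---rewrite $(\mathcal P)$ as the unconstrained minimization of $g=f+\delta(\cdot,D)$ and apply the Fermat rule $0\in\partial g(x)$---is exactly what the paper does. The divergence is in how the subdifferential of the sum is computed.

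The paper simply invokes Lemma~\ref{sum_rules} on the pair $f,\ \delta(\cdot,D)$ to obtain $\partial g(x)=\overline{\partial f(x)+N_D(x)}$ in one line, relying on the referenced \cite[Theorem~4.16]{Luan_Yao_Yen_2016} to cover the case where both summands are merely generalized polyhedral convex. You, by contrast, take the lemma strictly as stated in this paper (which requires $f_1$ to be polyhedral convex), factor $f=h+\delta(\cdot,{\rm dom}\,f)$ with $h$ genuinely polyhedral convex, apply Lemma~\ref{sum_rules}(a) to $h$ and $\delta(\cdot,D\cap{\rm dom}\,f)$, and then reconcile the outcome with $\overline{\partial f(x)+N_D(x)}$ via the explicit formulas \eqref{norm_cone_gpcs}, \eqref{rep_subd} and the annihilator identity $({\rm ker}\,A\cap{\rm ker}\,B)^{\perp}=\overline{({\rm ker}\,A)^{\perp}+({\rm ker}\,B)^{\perp}}$. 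Your route is longer but is fully self-contained under the hypotheses of Lemma~\ref{sum_rules} as written here, and it makes transparent precisely where and why the closure is needed; the paper's route is shorter but leans on the full external reference rather than the lemma as summarized.
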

\begin{proof} Clearly, ${\rm Sol}{\rm (}{\mathcal{P}}{\rm )}$ coincides with the solution set of a problem 
$${\rm (}{\mathcal{P'}}{\rm )} \quad \min\{f(x)+\delta(x, D) \mid x \in X\}.$$ 
Since the functions $f$, $\delta(\cdot, D)$ are proper generalized polyhedral convex and since $D \cap {\rm dom}\,f\neq \emptyset$, the function $\widetilde{f}:=f+\delta(\cdot, D)$ is proper generalized polyhedral convex by \cite[Theorem~3.7]{Luan_Yao_Yen_2016}.  On one hand, by Lemma~\ref{sum_rules} we have $$\partial \widetilde{f}(x)=\overline{\partial f(x) + N_D(x)} \quad (x \in D \cap {\rm dom}\,f).$$ On the other hand, since $\widetilde{f}$ is proper convex, a vector $x\in X$ belongs to ${\rm Sol}{\rm (}{\mathcal{P'}}{\rm )}$ if and only if $0 \in \partial \widetilde{f}(x)$; see \cite[Proposition 1, p. 81]{Ioffe_Tihomirov_1979}. Therefore, vector $x \in D \cap {\rm dom}\,f$ is a solution of ${\rm (}{\mathcal{P}}{\rm )}$ if and only if $0 \in \partial \widetilde{f}(x)=\overline{\partial f(x) + N_D(x)}.$

The proof is complete.  $\hfill\Box$
\end{proof}

One may ask: \textit{The closure sign in \eqref{eq_optim_cond} can be omitted, or not?} If $X$ is finite-dimensional, then $f$ and $\delta(\cdot, D)$ are polyhedral convex functions. So, by Lemma~\ref{sum_rules}, $$\partial (f+\delta(\cdot, D))(x)=\partial f(x) + N_D(x)\quad (x \in D \cap {\rm dom}\,f).$$ So, the closure sign in~\eqref{eq_optim_cond} is superfluous. However, as shown in next example, if $X$ is infinite-dimension\-al, then the closure sign in~\eqref{eq_optim_cond} is essential. 

\begin{example}\label{Ex_optim_cond} According to \cite[Example~3.34]{Bauschke_Combettes_2011}, one can find an infinite-dimensional Hilbert space $X$ and two suitable closed linear subspaces $X_1, X_2$ of $X$ with  $\overline{X_1+X_2}=X$ and $X_1+X_2 \neq X$. Let $D_i$ be the \textit{orthogonal complement} of $X_i$, i.e.,  $$D_i=\{x\in X\mid \langle x,u\rangle=0,\ \forall u\in X_i\}, \quad (i=1,2).$$ It is clear that $D_1, D_2$ are generalized polyhedral convex sets in $X$ and $D_1 \cap D_2=\{0\}$. Since $X_1+X_2 \neq X$, there exists $v^* \in X\setminus(X_1+X_2)$. Hence, $-v^* \notin X_1+X_2$ because $X_1+X_2$ is a linear subspace. Consider a generalized polyhedral convex optimization problem ${\rm (}{\mathcal{P}}{\rm )}$ with 
		\begin{equation*}
		f(x)=\begin{cases}
		\langle v^*, x \rangle & \text{ if } x \in D_1\\
		+\infty & \text{ if } x \notin D_1,
		\end{cases}
		\end{equation*}	
and $D=D_2$. Obviously, ${\rm Sol}{\rm (}{\mathcal{P}}{\rm )}=\{0\}$. Note that $$\partial f(0)=v^*+N_{D_1}(0)=v^*+X_1.$$ Combining this with the equality $N_{D}(0)=X_2$, one has $\partial f(0)+N_{D}(0)=v^*+X_1+X_2$. The inclusion $-v^* \notin X_1+X_2$ yields $0 \notin v^*+X_1+X_2$. It follows that $x=0$ is a solution of ${\rm (}{\mathcal{P}}{\rm )}$, but $0 \notin \partial f(0)+N_{D}(0)$.
\end{example}

Note that, if $f$ is a polyhedral convex function or $D$ is a polyhedral convex set, then $\partial (f+\delta(\cdot, D))(x)=\partial f(x) + N_D(x)$ for all $x \in D \cap {\rm dom}\,f$ by Lemma~\ref{sum_rules}. Thus, the following statement holds.
\begin{theorem}\label{Optimality_condition_2_pc} {\rm (Optimality condition II)}  Assume that either $f$ is a proper polyhedral convex function or $D$ is  polyhedral convex set. Then, $x \in D \cap {\rm dom}\,f$ is a solution of~${\rm (}{\mathcal{P}}{\rm )}$ if and only if $0 \in \partial f(x) + N_D(x).$
\end{theorem}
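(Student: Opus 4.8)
The plan is to obtain this result as a refinement of Optimality condition~I (Theorem~\ref{Optimality_condition_1}): the characterization is again \eqref{eq_optim_cond}, and all that has to be shown is that, under the extra polyhedrality hypothesis, the closure sign in \eqref{eq_optim_cond} may be deleted. Following the proof of Theorem~\ref{Optimality_condition_1}, I would first rewrite ${\rm (}{\mathcal{P}}{\rm )}$ as the unconstrained problem of minimizing the proper convex function $\widetilde{f}:=f+\delta(\cdot,D)$ over $X$, whose solution set coincides with ${\rm Sol}{\rm (}{\mathcal{P}}{\rm )}$, and recall from \cite[Proposition~1, p.~81]{Ioffe_Tihomirov_1979} that $x\in D\cap{\rm dom}\,f$ solves ${\rm (}{\mathcal{P}}{\rm )}$ if and only if $0\in\partial\widetilde{f}(x)$. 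Everything then reduces to establishing the \emph{exact} sum formula $\partial\widetilde{f}(x)=\partial f(x)+N_D(x)$, without closure.

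To this end I would distinguish the two cases permitted by the hypothesis. If $f$ is polyhedral convex, put $f_1:=f$ and $f_2:=\delta(\cdot,D)$; if $D$ is a polyhedral convex set, put $f_1:=\delta(\cdot,D)$ and $f_2:=f$. In either case $f_1$ is a proper polyhedral convex function and $f_2$ is a proper generalized polyhedral convex function, so Lemma~\ref{sum_rules}(a) yields $\partial\widetilde{f}(x)=\overline{\partial f_1(x)+\partial f_2(x)}$. The key observation is that $\partial f_1(x)$ is finitely generated: in the first case ${\rm dom}\,f$ is polyhedral convex, so one may take $B\equiv 0$ in \eqref{rep_subd}, whence $({\rm ker}\,B)^{\perp}=\{0\}$; in the second case $D$ is polyhedral convex, so one may take $A\equiv 0$ in \eqref{norm_cone_gpcs}, whence $({\rm ker}\,A)^{\perp}=\{0\}$. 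Thus in both cases $\partial f_1(x)$ has the form ${\rm conv}\{\cdots\}+{\rm cone}\{\cdots\}$ with finitely many generators, while $\partial f_2(x)$ is a generalized polyhedral convex set.

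The main step, and the only delicate one, is to remove the closure from $\overline{\partial f_1(x)+\partial f_2(x)}$. Here I would invoke the representation theorem for generalized polyhedral convex sets \cite[Theorem~2.7]{Luan_Yen_2015} to write $\partial f_2(x)={\rm conv}\{c_1,\dots,c_r\}+{\rm cone}\{d_1,\dots,d_s\}+S$ with $S$ a closed subspace of $X^*$. Adding the finitely generated set $\partial f_1(x)$ merges the convex-hull parts into the convex hull of finitely many points and the conical parts into a single finitely generated cone, so that $\partial f_1(x)+\partial f_2(x)$ is again of the form ${\rm conv}\{\cdots\}+{\rm cone}\{\cdots\}+S$ and hence, by the converse direction of the same representation theorem, is itself a generalized polyhedral convex set. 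Being a generalized polyhedral convex set, it is closed, so $\overline{\partial f_1(x)+\partial f_2(x)}=\partial f_1(x)+\partial f_2(x)=\partial f(x)+N_D(x)$. Combined with the equivalence $x\in{\rm Sol}{\rm (}{\mathcal{P}}{\rm )}\Leftrightarrow 0\in\partial\widetilde{f}(x)$, this gives the assertion. I expect this closedness to be the crux: Example~\ref{Ex_optim_cond}, in which $\partial f(0)$ and $N_D(0)$ are infinite-dimensional affine subspaces with neither finitely generated, shows that if polyhedrality is dropped the sum $\partial f(x)+N_D(x)$ may fail to be closed and the closure sign is then indispensable.
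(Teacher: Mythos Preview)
Your proposal is correct and follows the same overall line as the paper: reduce ${\rm (}\mathcal{P}{\rm )}$ to the unconstrained minimization of $\widetilde f=f+\delta(\cdot,D)$, invoke Fermat's rule $0\in\partial\widetilde f(x)$, and identify $\partial\widetilde f(x)$ via the sum rule of Lemma~\ref{sum_rules}. The only difference is in how the closure is removed. The paper dispatches the whole thing in one sentence by appealing directly to Lemma~\ref{sum_rules}(b) (i.e., \cite[Theorem~4.17]{Luan_Yao_Yen_2016}), which already asserts the \emph{exact} equality $\partial(f_1+f_2)(x)=\partial f_1(x)+\partial f_2(x)$ whenever one of the two functions is polyhedral convex and the other merely generalized polyhedral convex. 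You instead invoke part~(a) and then argue by hand that $\partial f_1(x)+\partial f_2(x)$ is a generalized polyhedral convex set (finitely generated pcs plus gpcs, via the representation theorem \cite[Theorem~2.7]{Luan_Yen_2015}) and hence closed. That argument is sound---it is essentially the mechanism behind \cite[Proposition~2.10]{Luan_Yao_Yen_2016} and, ultimately, behind Lemma~\ref{sum_rules}(b) itself---so you are re-proving a step that the paper simply cites. Your route makes the closedness transparent and shows exactly why Example~\ref{Ex_optim_cond} fails (neither summand is finitely generated there), at the cost of some extra work that part~(b) of the lemma already packages.
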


The forthcoming example is designed as an illustration for Theorem~\ref{Optimality_condition_2_pc}.

\begin{example}\label{solve_Example1_gpcp} Let ${\rm (}{\mathcal{P}}{\rm )}$ be the problem described in Example~\ref{Example1_gpcp}. To solve it, we first compute the set $\partial f(x) + N_D(x)$ for every $x \in D$. Clearly, $f$ is a polyhedral convex function with ${\rm dom}\,f=X$. Hence, ${\rm dom}\,f$ can be given by \eqref{rep_domf}, where $Z=\{0\},B\equiv 0, z=0$ and $q=0$. Therefore, by~\eqref{rep_subd} one gets $$\partial f(x)={\rm conv}\left\{v_k^* \mid  k \in \Theta(x)\right\} \quad (x \in X).$$ Since~$L$ is a closed affine subspace of~$X$, by \cite[Remark~2.196]{Bonnans_Shapiro_2000}, one can find a continuous surjective linear mapping~$A$ from $X$ to a lcHtvs $Y$ and a vector $y \in Y$ satisfying $L=\left\{x \in X \mid Ax=y \right\}$. It is easy to verify that ${\rm ker}\,A=L-e_0$. Combining this with \eqref{norm_cone_gpcs}, we obtain $N_D(x)={\rm cone}\left\{x_i^* \mid  i \in I(x)\right\} +(L-e_0)^{\perp}$ for every $x \in D$. Hence,
		\begin{equation}\label{Solve_Ex1_sum_sub_normalcone}
		\partial f(x) + N_D(x)={\rm conv}\left\{v_k^* \mid  k \in \Theta(x)\right\}+{\rm cone}\left\{x_i^* \mid  i \in I(x)\right\} +(L-e_0)^{\perp}
		\end{equation}		 
for every $x \in D$. Now, suppose that $x$ is a solution of ${\rm (}{\mathcal{P}}{\rm )}$.  The ``only if'' part of Theorem~\ref{Optimality_condition_2_pc} tells us that $0 \in \partial f(x) + N_D(x)$. Then, due to  \eqref{Solve_Ex1_sum_sub_normalcone}, we have $$0=\lambda_1 v^*_1+\lambda_2v^*_2+\mu_1x^*_1 +\mu_2 x^*_2 +x^*_0,$$ 
where $\lambda_1 \geq 0, \lambda_2 \geq 0$, $\lambda_1+\lambda_2=1$, $\mu_1 \geq 0$, $\mu_2 \geq 0$, $x^*_0 \in (L-e_0)^{\perp}$, with $\lambda_k=0$ if $k \notin \Theta(x)$, and $\mu_i=0$ if $i \notin I(x)$. Since $v^*_1=x^*_1-x^*_2$ and $v^*_2=-x^*_1-x^*_2$, one has
		\begin{equation*}\label{Example2_rep}
		(\lambda_1-\lambda_2+\mu_1)x^*_1+(\mu_2-1)x^*_2+x^*_0=0.
		\end{equation*}
Therefore, $(\lambda_1-\lambda_2+\mu_1) \langle x^*_1, e_i \rangle + (\mu_2-1)\langle x^*_2, e_i \rangle +\langle x^*_0, e_i \rangle =0$ with $i=1,2$. For each index $i \in \{1,2\}$, since $e_0+e_i \in L$, we must have $\langle x^*_0, e_i \rangle =0$. Consequently, $\lambda_1-\lambda_2+\mu_1=0$ and $\mu_2-1=0$. The latter fact yields $2 \in I(x)$, i.e., $\langle x^*_2, x \rangle=2$. We observe that $1 \notin I(x)$. Indeed, on the contrary, suppose that $1 \in I(x)$, i.e., $\langle x^*_1, x \rangle=1$. Then, $\langle v^*_1, x \rangle+1=0$ and $\langle v^*_2, x \rangle =-3$; so $f(x)=0$ and $\Theta(x)=\{1\}$. Thus $\lambda_2=0$,  $\lambda_1=1$ and $\mu_1=-1 <0$, a contradiction. Since $1 \notin I(x)$, we must have $\mu_1=0$ and $\lambda_1=\lambda_2=\frac{1}{2}$; hence $\Theta(x)=\{1,2\}$. This means that $f(x)=\langle v^*_1, x \rangle +1 =  \langle v^*_2, x \rangle$. Of course, $\langle x^*_1, x \rangle=-\frac{1}{2}$. We have thus proved that if $x \in {\rm Sol(}{\mathcal{P}}{\rm )}$, then $\langle x^*_1, x \rangle=-\frac{1}{2}$ and $\langle x^*_2, x \rangle=2$. Conversely, if $u \in D$ satisfies $\langle x^*_1, u \rangle=-\frac{1}{2}$ and $\langle x^*_2, u \rangle=2$, then one has $\Theta(u)=\{1,2\}$ and $I(u)=\{2\}$. By \eqref{Solve_Ex1_sum_sub_normalcone}, $$0=\frac{1}{2}v^*_1+\frac{1}{2}v^*_2 + x^*_2+0 \in \partial f(u) + N_D(u).$$ 
Therefore, the ``if'' part in Theorem \ref{Optimality_condition_2_pc} shows that  $u$ is a solution of ${\rm (}{\mathcal{P}}{\rm )}$.  Thus,
\begin{equation*}
{\rm Sol}{\rm (}{\mathcal{P}}{\rm )}=\left\{u \in L \mid \langle x^*_1, u \rangle = -\frac{1}{2}, \,  \langle x^*_2, u \rangle =2\right\}.
\end{equation*}
Using this formula, one can verify that $e_0-\frac{1}{2}e_1+2e_2 \in {\rm Sol}{\rm (}{\mathcal{P}}{\rm )}$. Thus, thanks to Theorem~\ref{Optimality_condition_2_pc}, we have found the formula for the solution set of ${\rm (}{\mathcal{P}}{\rm )}$  and showed that it is nonempty. The optimal value of ${\rm (}{\mathcal{P}}{\rm )}$ is $-\frac{3}{2}$. 
\end{example}

Under the assumptions of Theorem~\ref{Optimality_condition_2_pc}, by \cite[Proposition~2.11]{Luan_Yao_Yen_2016} we know that $D-{\rm dom}\,f$ is a pcs in~$X$. We want to have an analogue of Theorem~\ref{Optimality_condition_2_pc} in a Banach space setting for the case $D-{\rm dom}\,f$ is a gpcs. Next lemma is useful for the proof of the desired result.
\begin{lemma}\label{sum_affinesub} Let $L_1, L_2$ be closed affine subspaces, $P_1, P_2$ polyhedral convex sets in~$X$. Suppose that $D_1:=L_1 \cap P_1$ and $D_2:=L_2 \cap P_2$ are nonempty. If $D_1-D_2$ is a generalized polyhedral convex set in $X$, so is $L_1-L_2$.    	
\end{lemma}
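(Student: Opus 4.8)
The plan is to reduce the assertion to the closedness of a sum of two closed subspaces, and then to exploit that the $P_i$ are cut out by \emph{finitely many} inequalities together with the representation theorem for generalized polyhedral convex sets.

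First I would record that $L_1-L_2$ is automatically an affine subspace. Writing $L_i=a_i+V_i$ with $V_i:=L_i-L_i$ a closed linear subspace and choosing $a_i\in D_i$, one has $L_1-L_2=(a_1-a_2)+(V_1+V_2)$, so its direction space is $V_1+V_2$. Since a gpcs is automatically closed, while every closed affine subspace is a gpcs (take $p=0$ in the definition, via \cite[Remark~2.196]{Bonnans_Shapiro_2000}), the whole lemma is \emph{equivalent} to showing that $V_1+V_2$ is a closed subspace.

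The engine of the argument is the observation that \emph{the affine hull of any gpcs is a closed affine subspace}. Indeed, by the representation theorem \cite[Theorem~2.7]{Luan_Yen_2015} a gpcs can be written as $\mathrm{conv}\{u_1,\dots,u_d\}+\mathrm{cone}\{w_1,\dots,w_\ell\}+X_0$ with $X_0$ a closed subspace, so its affine hull equals $u_1+\mathrm{span}\{u_j-u_1,\,w_r\}+X_0$, i.e. a translate of a finite-dimensional subspace plus $X_0$; since the sum of a finite-dimensional subspace and a closed subspace is closed in any topological vector space, the affine hull is closed. Applying this to the gpcs $D_1-D_2$ and noting that the direction space of $\mathrm{aff}(D_1-D_2)$ is $W_1+W_2$, where $W_i:=\mathrm{span}(D_i-D_i)$ is the direction space of $\mathrm{aff}(D_i)$, I conclude that $W_1+W_2$ is closed.

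It then remains to relate $W_i$ to $V_i$. Here I would use that $D_i=L_i\cap P_i$ with $P_i=\{x\mid \langle x_k^*,x\rangle\le\alpha_k,\ k\in F_i\}$ and $F_i$ finite. A direct argument — average the points of $D_i$ witnessing strictness of each non-always-active constraint to obtain a relative-interior point $\bar v$, then move along segments from $\bar v$ — shows that $\mathrm{aff}(D_i)=L_i\cap\bigcap_{k\in A_i}\{x\mid\langle x_k^*,x\rangle=\alpha_k\}$, with $A_i$ the set of indices active at \emph{every} point of $D_i$. Consequently $W_i=V_i\cap\bigcap_{k\in A_i}\ker x_k^*$ has codimension at most $|A_i|<\infty$ in $V_i$. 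Writing $V_i=W_i\oplus E_i$ with $\dim E_i<\infty$ gives $V_1+V_2=(W_1+W_2)+(E_1+E_2)$; since $W_1+W_2$ is closed and $E_1+E_2$ is finite-dimensional, $V_1+V_2$ is closed, whence $L_1-L_2$ is a closed affine subspace and therefore a gpcs.

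The step I expect to be the main obstacle is the finite-codimension claim, namely making rigorous the description of $\mathrm{aff}(D_i)$ in the infinite-dimensional setting (without appealing to nonemptiness of the relative interior as a black box). The conceptual heart, however, is the clean dichotomy that the hypothesis injects: $D_1-D_2$ being a gpcs forces its affine hull to be closed, while the polyhedral parts $P_i$ can only shrink each $V_i$ by finite codimension, so the potential non-closedness of $V_1+V_2$ is ruled out.
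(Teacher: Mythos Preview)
Your argument is correct and rests on the same core idea as the paper's proof---that intersecting $L_i$ with the polyhedral set $P_i$ shrinks the direction space only by finite codimension---but the execution differs. You pass through the affine hull: you show $\mathrm{aff}(D_1-D_2)$ is closed via the representation theorem, identify its direction space as $W_1+W_2$, establish by a hands-on relative-interior argument that each $W_i$ has finite codimension in $V_i$, and conclude that $V_1+V_2$ is closed. The paper instead quotes \cite[Lemma~2.12]{Luan_Yao_Yen_2016} to write each $D_i-x_i$ (a pcs in the closed subspace $M_i:=L_i-x_i$) as $\mathrm{conv}\{\cdots\}+\mathrm{cone}\{\cdots\}+M_{i,0}$ with $M_{i,0}$ of finite codimension in $M_i$; this yields at once $L_i=D_i+F_i$ for a finite-dimensional $F_i$, hence $L_1-L_2=(D_1-D_2)+(F_1+F_2)$, and the conclusion follows from the fact that a gpcs plus a finite-dimensional subspace is again a gpcs \cite[Proposition~2.10]{Luan_Yao_Yen_2016}. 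The paper's route is shorter because the cited lemma already packages the finite-codimension step and the identity $L_1-L_2=(D_1-D_2)+\text{finite span}$ bypasses the affine-hull detour entirely; your route is more self-contained and makes the mechanism behind the finite-codimension claim explicit.
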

\begin{proof} For any $i \in \{1,2\}$ and $x_i \in D_i$, observe that $D_i':=D_i-x_i$ is a nonempty pcs in the closed linear subspace $M_i:=L_i-x_i$. By \cite[Lemma~2.12]{Luan_Yao_Yen_2016}, there exist $u_{i,1},\dots,u_{i,k_i}$ in $M_i$,  $v_{i,1},\dots,v_{i,\ell_i}$ in $M_i$, and a closed linear subspace $M_{i,0}$ of finite codimension of~$M_i$ such that 
$$D_i'={\rm conv}\,\left\{u_{i,1}, \dots,u_{i,k_i}\right\}+{\rm cone}\, \left\{v_{i,1}, \dots, v_{i,\ell_i}\right\}+M_{i,0}.$$  
Due to the finite codimension property of $M_{i,0}$ in $M_i$, one can find  $x_{i,1}, \dots, x_{i,m_i}$ in $M_i$ such that $M_i= M_{i,0} + {\rm span} \left\{x_{i,1}, \dots, x_{i,m_i}\right\}.$
Hence,
\begin{equation*}
\begin{aligned}
M_i &\supset D_i'+{\rm span} \left\{x_{i,1}, \dots, x_{i,m_i}\right\}\\
&={\rm conv}\,\left\{u_{i,1}, \dots,u_{i,k_i}\right\}+{\rm cone}\, \left\{v_{i,1}, \dots, v_{i,\ell_i}\right\}+M_{i,0}+{\rm span} \left\{x_{i,1}, \dots, x_{i,m_i}\right\}\\
&={\rm conv}\,\left\{u_{i,1}, \dots,u_{i,k_i}\right\}+{\rm cone}\, \left\{v_{i,1}, \dots, v_{i,\ell_i}\right\}+M_i\\
&=M_i.
\end{aligned}
\end{equation*}
This forces $M_i=D_i'+{\rm span} \left\{x_{i,1}, \dots, x_{i,m_i}\right\}$. Consequently, 
\begin{equation*}
\begin{aligned}
L_i&=x_i+M_i=x_i+D_i'+{\rm span} \left\{x_{i,1}, \dots, x_{i,m_i}\right\}\\
&=D_i+{\rm span} \left\{x_{i,1}, \dots, x_{i,m_i}\right\}.
\end{aligned}
\end{equation*}
It is clear that $-L_2=(-D_2)+{\rm span} \left\{x_{2,1}, \dots, x_{2,m_2}\right\}.$ Therefore,
\begin{equation*}
L_1-L_2=(D_1-D_2)+{\rm span} \left\{x_{1,1}, \dots, x_{1,m_1}, x_{2,1}, \dots, x_{2,m_2} \right\}.
\end{equation*}
Since $D_1-D_2$ is a gpcs by our assumption  and ${\rm span} \left\{x_{1,1}, \dots, x_{1,m_1}, x_{2,1}, \dots, x_{2,m_2} \right\}$ is a finite-dimensional subspace, $L_1-L_2$ is a generalized polyhedral convex set; see \cite[Proposition~2.10]{Luan_Yao_Yen_2016}. $\hfill\Box$
\end{proof}

\begin{theorem}\label{Optimality_condition_3_Banach} {\rm (Optimality condition III)}  Suppose that $X$ is a Banach space and the set $D-{\rm dom}\,f$ is generalized polyhedral convex. Then, $x \in D \cap {\rm dom}\,f$ is a solution of ${\rm (}{\mathcal{P}}{\rm )}$ if and only if $0 \in \partial f(x) + N_D(x).$
\end{theorem}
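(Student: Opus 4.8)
The plan is to combine Theorem~\ref{Optimality_condition_1} with a closedness argument showing that, under the present hypotheses, the sum $\partial f(x)+N_D(x)$ is already weak$^*$-closed, so that the closure sign in \eqref{eq_optim_cond} may be dropped. By Theorem~\ref{Optimality_condition_1}, a vector $x\in D\cap{\rm dom}\,f$ solves $({\mathcal P})$ if and only if $0\in\overline{\partial f(x)+N_D(x)}$; hence it suffices to establish the equality $\overline{\partial f(x)+N_D(x)}=\partial f(x)+N_D(x)$. To this end I would first write the sum explicitly: using the representation \eqref{rep_subd} of $\partial f(x)$ and the formula \eqref{norm_cone_gpcs} for $N_D(x)$,
\begin{equation*}
\partial f(x)+N_D(x)={\rm conv}\{v_k^*\mid k\in\Theta(x)\}+{\rm cone}\{u_j^*\mid j\in J(x)\}+{\rm cone}\{x_i^*\mid i\in I(x)\}+({\rm ker}\,B)^{\perp}+({\rm ker}\,A)^{\perp}.
\end{equation*}
This set has exactly the shape of the set $Q$ from the proof of Theorem~\ref{Exist_thm_3}, except that its subspace part is $({\rm ker}\,B)^{\perp}+({\rm ker}\,A)^{\perp}$ rather than the annihilator $({\rm ker}\,A\cap{\rm ker}\,B)^{\perp}$. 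Thus everything reduces to proving that these two subspaces of $X^*$ coincide.

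The hypothesis enters through Lemma~\ref{sum_affinesub}. Writing $D=L_D\cap P_D$ and ${\rm dom}\,f=L_f\cap P_f$, where $L_D=\{x\mid Ax=y\}$ and $L_f=\{x\mid Bx=z\}$ are closed affine subspaces and $P_D,P_f$ are polyhedral convex sets, the assumption that $D-{\rm dom}\,f$ is generalized polyhedral convex allows me to apply Lemma~\ref{sum_affinesub} (with $L_1=L_D$, $L_2=L_f$) and conclude that $L_D-L_f$ is a gpcs. Since $D\cap{\rm dom}\,f\neq\emptyset$, both affine subspaces pass through a common point, so $L_D-L_f={\rm ker}\,A+{\rm ker}\,B$; being a gpcs, this linear subspace is closed. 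This is the single place where the generalized polyhedral structure of $D-{\rm dom}\,f$ is used.

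Now the completeness of $X$ comes into play. For the closed subspaces $V_1={\rm ker}\,A$ and $V_2={\rm ker}\,B$ of the Banach space $X$, the classical duality between sums of subspaces and their annihilators (see, e.g., \cite{Bonnans_Shapiro_2000}) asserts that $V_1+V_2$ is norm-closed if and only if $V_1^{\perp}+V_2^{\perp}$ is weak$^*$-closed, in which case $V_1^{\perp}+V_2^{\perp}=(V_1\cap V_2)^{\perp}$. Hence $({\rm ker}\,B)^{\perp}+({\rm ker}\,A)^{\perp}=({\rm ker}\,A\cap{\rm ker}\,B)^{\perp}$, and substituting this into the displayed formula yields
\begin{equation*}
\partial f(x)+N_D(x)={\rm conv}\{v_k^*\mid k\in\Theta(x)\}+{\rm cone}\{u_j^*\mid j\in J(x)\}+{\rm cone}\{x_i^*\mid i\in I(x)\}+({\rm ker}\,A\cap{\rm ker}\,B)^{\perp}.
\end{equation*}
Exactly as for the set $Q$ in Theorem~\ref{Exist_thm_3}, the representation theorem \cite[Theorem~2.7]{Luan_Yen_2015}, applied in $X^*$ endowed with the weak$^*$-topology, certifies that this set is generalized polyhedral convex and therefore weak$^*$-closed. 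Consequently $\overline{\partial f(x)+N_D(x)}=\partial f(x)+N_D(x)$, and the conclusion follows from Theorem~\ref{Optimality_condition_1}.

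I expect the main obstacle to be the subspace--annihilator duality step, namely verifying that norm-closedness of ${\rm ker}\,A+{\rm ker}\,B$ forces weak$^*$-closedness of $({\rm ker}\,A)^{\perp}+({\rm ker}\,B)^{\perp}$ together with the identity with $({\rm ker}\,A\cap{\rm ker}\,B)^{\perp}$. This is precisely where the completeness of $X$ is indispensable, and one must be careful throughout that closures and annihilators are taken in the weak$^*$-topology of $X^*$, so that the resulting subspace is weak$^*$-closed and the representation theorem is applicable in $X^*$.
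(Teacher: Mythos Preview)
Your proposal is correct and follows essentially the same route as the paper's proof: write $\partial f(x)+N_D(x)$ explicitly via \eqref{rep_subd} and \eqref{norm_cone_gpcs}, invoke Lemma~\ref{sum_affinesub} to see that ${\rm ker}\,A+{\rm ker}\,B$ is closed, use the Banach-space annihilator identity (the paper cites \cite[Theorem~2.16]{Brezis_2011}) to replace $({\rm ker}\,A)^{\perp}+({\rm ker}\,B)^{\perp}$ by $({\rm ker}\,A\cap{\rm ker}\,B)^{\perp}$, and conclude via \cite[Theorem~2.7]{Luan_Yen_2015} that the sum is a gpcs, hence closed, so Theorem~\ref{Optimality_condition_1} finishes. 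The only cosmetic difference is that you exploit the common point in $L_D\cap L_f$ to get $L_D-L_f={\rm ker}\,A+{\rm ker}\,B$ directly, whereas the paper translates by $x_1-x_2$ and appeals to \cite[Proposition~2.10]{Luan_Yao_Yen_2016}.
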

\begin{proof} Let ${\rm dom}\,f$  be described by \eqref{rep_domf}. Put
$$L_1=\{x \in X \mid Ax=y\},\ L_2=\{x \in X \mid Bx=z\}$$
and
\begin{equation*}
\begin{aligned}
P_1=\{x \in X \mid  \langle x^*_i, x \rangle \leq \alpha_i,\  i=1,\dots,p\},\ P_2=\{x \in X \mid  \langle u^*_j, x \rangle \leq \gamma_j,\  j=1,\dots,q\}. 
\end{aligned}
\end{equation*} 
Clearly, $L_1,\, L_2$ are closed affine subspaces, and $P_1,\, P_2$ are  polyhedral convex sets. One has $D=L_1 \cap P_1$ and ${\rm dom}\,f=L_2 \cap P_2$. Since $D-{\rm dom}\,f$ is a generalized polyhedral convex set, $L_1-L_2$ is a gpcs by Lemma~\ref{sum_affinesub}. For every $i \in \{1,2\}$, select a point $x_i \in L_i$. Obviously, 
\begin{equation*}
\begin{aligned}
{\rm ker}\, A + {\rm ker}\, B&={\rm ker}\, A - {\rm ker}\, B\\
&=(L_1-x_1)-(L_2-x_2)=(L_1-L_2)-(x_1-x_2).
\end{aligned}
\end{equation*} 
Since $L_1-L_2$ is a gpcs, ${\rm ker}\, A + {\rm ker}\, B$ is a gpcs by \cite[Proposition~2.10]{Luan_Yao_Yen_2016}. In particular, ${\rm ker}\, A + {\rm ker}\, B$ is closed. Hence, by \cite[Theorem~2.16]{Brezis_2011},
\begin{equation}\label{sum_subspaces}
\left( {\rm ker}\,A \right)^{\perp} + \left({\rm ker}\,B\right)^{\perp}=\left( {\rm ker}\,A \cap {\rm ker}\,B\right)^{\perp}.
\end{equation}
Therefore, for every $x \in D \cap {\rm dom}\, f$, from \eqref{norm_cone_gpcs}, \eqref{rep_subd}, and \eqref{sum_subspaces} we obtain
\begin{equation*}\label{sum_subd_normalcone}
\begin{aligned}
\partial f(x) + N_D(x) & = {\rm conv}\left\{v_k^* \mid  k \in \Theta(x)\right\}+ {\rm cone}\,\{u^*_j \mid  j \in J(x)\} + ({\rm ker}\,B)^{\perp}\\
&\hspace*{3.3cm}+{\rm cone}\,\{x^*_i\mid   i \in I(x)\} + ({\rm ker}\,A)^{\perp}\\
& = {\rm conv}\left\{v_k^* \mid  k \in \Theta(x)\right\}+{\rm cone}\,\{x^*_i\mid   i \in I(x)\}\\
&\hspace*{1.5cm}+{\rm cone}\,\{u^*_j \mid  j \in J(x)\}+ \left( {\rm ker}\,A \cap {\rm ker}\,B\right)^{\perp}.\\
\end{aligned}
\end{equation*}
Then, by the representation theorem for gpcs \cite[Theorem~2.7]{Luan_Yen_2015}, we conclude that $\partial f(x) + N_D(x)$ is a generalized polyhedral convex set. So, the latter is closed. Combining this with Theorem~\ref{Optimality_condition_1}, we obtain the assertion.  $\hfill\Box$
\end{proof}

Turning back to the optimality condition given by Theorem~\ref{Optimality_condition_1}, we observe that sometimes it is difficult to find the topological closure of the sum $\partial f(x) + N_D(x)$. The forthcoming theorem gives a new optimality condition for ${\rm (}{\mathcal{P}}{\rm )}$ in the general case, where no topological closure sign is needed.

\begin{theorem}\label{optim_cond_4} {\rm (Optimality condition IV)} A vector $x \in D \cap {\rm dom}\,f$ is a solution of ${\rm (}{\mathcal{P}}{\rm )}$ if and only if
	\begin{equation}\label{eq_optimal_cond_2}
	\begin{array}{rccl}
	0 & \in \, {\rm conv}\left\{v_k^* \mid  k \in \Theta(x)\right\} &+& {\rm cone}\,\{x^*_i\mid   i \in I(x)\}\\
	&&+&{\rm cone}\,\{u^*_j \mid  j \in J(x)\}+ ({\rm ker}\,A \cap {\rm ker}\,B)^{\perp}.
	\end{array}
	\end{equation}	 
\end{theorem}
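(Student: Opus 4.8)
The plan is to deduce the statement from the optimality condition with closure, Theorem~\ref{Optimality_condition_1}, by showing that the set $Q$ appearing on the right-hand side of \eqref{eq_optimal_cond_2} is exactly the closure of $\partial f(x)+N_D(x)$ in the weak$^*$ topology of $X^*$. Once this is established, the condition $0\in\overline{\partial f(x)+N_D(x)}$ from Theorem~\ref{Optimality_condition_1} reduces to $0\in Q$, which is \eqref{eq_optimal_cond_2}, and the equivalence follows immediately. To begin, I would substitute the explicit descriptions \eqref{rep_subd} of $\partial f(x)$ and \eqref{norm_cone_gpcs} of $N_D(x)$ to get
\[
\partial f(x)+N_D(x)=S+({\rm ker}\,A)^{\perp}+({\rm ker}\,B)^{\perp},
\]
where $S:={\rm conv}\{v_k^*\mid k\in\Theta(x)\}+{\rm cone}\{x^*_i\mid i\in I(x)\}+{\rm cone}\{u^*_j\mid j\in J(x)\}$, so that $Q=S+({\rm ker}\,A\cap{\rm ker}\,B)^{\perp}$. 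In this way everything reduces to comparing $({\rm ker}\,A)^{\perp}+({\rm ker}\,B)^{\perp}$ with the single annihilator $({\rm ker}\,A\cap{\rm ker}\,B)^{\perp}$.

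For the inclusion $\overline{\partial f(x)+N_D(x)}\subseteq Q$, I would first note that $({\rm ker}\,A)^{\perp}+({\rm ker}\,B)^{\perp}\subseteq({\rm ker}\,A\cap{\rm ker}\,B)^{\perp}$ (immediate from ${\rm ker}\,A\cap{\rm ker}\,B\subseteq{\rm ker}\,A$ and $\subseteq{\rm ker}\,B$), whence $\partial f(x)+N_D(x)\subseteq Q$. It then suffices to see that $Q$ is closed. Since ${\rm ker}\,A\cap{\rm ker}\,B$ is a closed subspace, $({\rm ker}\,A\cap{\rm ker}\,B)^{\perp}$ is a weak$^*$-closed subspace, and $Q$ is the sum of a polytope, two polyhedral cones, and this subspace; hence by the representation theorem for generalized polyhedral convex sets \cite[Theorem~2.7]{Luan_Yen_2015}, $Q$ is a gpcs and in particular weak$^*$-closed. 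Taking closures gives $\overline{\partial f(x)+N_D(x)}\subseteq Q$, just as in the proof of Theorem~\ref{Optimality_condition_3_Banach}.

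The reverse inclusion is the crux, and it is here that the closure sign does real work. I would establish the duality identity $\overline{({\rm ker}\,A)^{\perp}+({\rm ker}\,B)^{\perp}}=({\rm ker}\,A\cap{\rm ker}\,B)^{\perp}$ via the bipolar theorem: for the subspace $E:=({\rm ker}\,A)^{\perp}+({\rm ker}\,B)^{\perp}$ its weak$^*$ closure is $(E^{\perp})^{\perp}$, and $E^{\perp}=\big(({\rm ker}\,A)^{\perp}\big)^{\perp}\cap\big(({\rm ker}\,B)^{\perp}\big)^{\perp}={\rm ker}\,A\cap{\rm ker}\,B$, the last equality coming from \cite[Proposition~2.40]{Bonnans_Shapiro_2000} together with the closedness of ${\rm ker}\,A$ and ${\rm ker}\,B$. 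Feeding this back and using $S+\overline{E}\subseteq\overline{S+E}$ (continuity of addition), I would write
\[
Q=S+\overline{({\rm ker}\,A)^{\perp}+({\rm ker}\,B)^{\perp}}\subseteq\overline{S+({\rm ker}\,A)^{\perp}+({\rm ker}\,B)^{\perp}}=\overline{\partial f(x)+N_D(x)}.
\]
Combining the two inclusions yields $\overline{\partial f(x)+N_D(x)}=Q$, and Theorem~\ref{Optimality_condition_1} then finishes the proof.

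I expect the main obstacle to be bookkeeping of the topology: the closure in Theorem~\ref{Optimality_condition_1} must be read as the weak$^*$ closure in $X^*$ (the topology for which annihilators of closed subspaces are closed and for which the bipolar theorem applies), and the manipulations $S+\overline{E}\subseteq\overline{S+E}$ and the weak$^*$-closedness of $Q$ must all be taken with respect to this same topology. Unlike in Theorem~\ref{Optimality_condition_3_Banach}, no Banach-space or closed-sum hypothesis is imposed, so $({\rm ker}\,A)^{\perp}+({\rm ker}\,B)^{\perp}=({\rm ker}\,A\cap{\rm ker}\,B)^{\perp}$ may fail without taking closure; this is precisely why the explicit set $Q$ in \eqref{eq_optimal_cond_2} already carries the full annihilator $({\rm ker}\,A\cap{\rm ker}\,B)^{\perp}$ and why the closure sign of Theorem~\ref{Optimality_condition_1} can be safely dropped.
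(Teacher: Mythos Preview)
Your argument is correct, and it is a genuinely different route from the paper's. The paper does \emph{not} reduce to Theorem~\ref{Optimality_condition_1}; instead it gives a direct proof paralleling that of Theorem~\ref{Exist_thm_3}: for sufficiency it unpacks \eqref{eq_optimal_cond_2} into multipliers and verifies $f'(x;h)\ge 0$ for all $h\in T_D(x)$ using the explicit directional-derivative formula of Lemma~\ref{dir_der_gpcf}; for necessity it assumes $0\notin Q$, separates $\{0\}$ from the weak$^*$-closed gpcs $Q$ by some $v\in X$, reads off from $0^+Q$ that $v\in T_D(x)\cap T_{{\rm dom}\,f}(x)$, and concludes $f'(x;v)<0$.

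Your approach is shorter and more structural: the single identity $\overline{({\rm ker}\,A)^{\perp}+({\rm ker}\,B)^{\perp}}^{\,w^*}=({\rm ker}\,A\cap{\rm ker}\,B)^{\perp}$, obtained by bipolarity, explains in one stroke why the closure in Theorem~\ref{Optimality_condition_1} collapses to the explicit annihilator in \eqref{eq_optimal_cond_2}, and it makes transparent the contrast with Theorem~\ref{Optimality_condition_3_Banach}, where the Banach hypothesis forces the sum of annihilators to be already closed. The paper's approach, on the other hand, is independent of Theorem~\ref{Optimality_condition_1} and hence of the subdifferential sum rule (Lemma~\ref{sum_rules}); it relies only on the elementary Lemma~\ref{dir_der_gpcf} and a separation argument, which keeps the proof self-contained and mirrors the existence theory of Section~3. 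Your only minor notational looseness is writing $E^{\perp}$ for the pre-annihilator of $E\subset X^*$ in $X$; the paper reserves $^{\perp}$ for annihilators of subsets of $X$, but since it also uses $(({\rm ker}\,A\cap{\rm ker}\,B)^{\perp})^{\perp}$ in the same sense, and \cite[Proposition~2.40]{Bonnans_Shapiro_2000} covers both directions, this causes no difficulty.
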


For proving this result, we will use the directional differentiability property of convex functions and a lemma.

\medskip
It is well known \cite[Proposition~3, p.~194]{Ioffe_Tihomirov_1979} (see also \cite[Theorem~2.1.13]{Zalinescu_2002}) that  the \textit{directional derivative}  $f'(x; h):=\lim\limits_{t\to 0^+} \frac{f(x+th)-f(x)}{t}$ of a proper convex function $f: X \to \bar{\mathbb{R}}$ at $x \in {\rm dom}\,f$ w.r.t. a direction $h \in X$ exists, and one has 
\begin{equation}\label{directional derivative}
f'(x; h)=\inf\limits_{t >0} \frac{f(x+th)-f(x)}{t}.
\end{equation}
(The situation $f'(x; h)=-\infty$ may occur. To have a simple example, one can choose $f(x)=-\sqrt{2x-x^2}$ for $x\in [0,2]$,  $f(x)=+\infty$ otherwise, and note that $f'(0;1)=-\infty$.) According \cite[Theorem~3.9]{Luan_Yao_Yen_2016}, if $f$ is a proper gpcf (resp., a proper pcf), so is $f'(x; \cdot)$. In addition, from \eqref{directional derivative} it follows that $\bar x \in D$ is a solution of ${\rm (}{\mathcal{P}}{\rm )}$ if and only if $f'(\bar x;h) \geq 0$ for every $h \in T_D(\bar x)$. 

\begin{lemma}\label{dir_der_gpcf} If $x \in {\rm dom}\,f$, then
	\begin{equation}\label{eq_dir_der_gpcf}
	f'(x;h) = \begin{cases}
	\max\{\langle v_k^*, h \rangle \mid  k \in \Theta(x)\} & \textrm{ if } \, h \in T_{{\rm dom}\,f}(x)\\
	+\infty & \textrm{ if } \,  h \notin T_{{\rm dom}\,f}(x).
	\end{cases}
	\end{equation}	 
\end{lemma}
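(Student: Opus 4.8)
The plan is to compute the directional derivative $f'(x;h)$ directly from its definition via the representation \eqref{eq_rep_gcpf} of $f$, splitting into the two cases according to whether $h$ lies in the tangent cone $T_{{\rm dom}\,f}(x)$ or not. The key observation is that, since ${\rm dom}\,f$ is a generalized polyhedral convex set, Proposition~2.19 of \cite{Luan_Yao_Yen_2016} (recalled in the Preliminaries) gives $T_{{\rm dom}\,f}(x)={\rm cone}\,({\rm dom}\,f - x)$ \emph{without any closure}. This means $h\in T_{{\rm dom}\,f}(x)$ if and only if $x+th\in{\rm dom}\,f$ for all sufficiently small $t>0$, which is exactly the condition under which the difference quotients in \eqref{directional derivative} are finite.

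\textbf{The case $h\notin T_{{\rm dom}\,f}(x)$.} First I would dispose of the easy case. If $h\notin T_{{\rm dom}\,f}(x)={\rm cone}\,({\rm dom}\,f-x)$, then for every $t>0$ one has $x+th\notin{\rm dom}\,f$, hence $f(x+th)=+\infty$. By \eqref{directional derivative}, every difference quotient equals $+\infty$, so $f'(x;h)=+\infty$, matching the second branch of \eqref{eq_dir_der_gpcf}.

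\textbf{The case $h\in T_{{\rm dom}\,f}(x)$.} Now suppose $h\in{\rm cone}\,({\rm dom}\,f-x)$, so there exists $\bar t>0$ with $x+th\in{\rm dom}\,f$ for all $t\in[0,\bar t]$. On this range $f(x+th)=\max_{1\le k\le m}\{\langle v_k^*,x\rangle+\beta_k+t\langle v_k^*,h\rangle\}$ by \eqref{eq_rep_gcpf}. The main computational step is to argue that for $t$ small enough the maximum is attained only at indices $k\in\Theta(x)$ (those active at $x$), because for $k\notin\Theta(x)$ the strict inequality $\langle v_k^*,x\rangle+\beta_k<f(x)$ persists by continuity for small $t$. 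Thus for small $t>0$,
\begin{equation*}
\frac{f(x+th)-f(x)}{t}=\max_{k\in\Theta(x)}\Big\{\frac{\langle v_k^*,x\rangle+\beta_k-f(x)}{t}+\langle v_k^*,h\rangle\Big\}=\max_{k\in\Theta(x)}\langle v_k^*,h\rangle,
\end{equation*}
where the bracketed constant vanishes precisely because $\langle v_k^*,x\rangle+\beta_k=f(x)$ for $k\in\Theta(x)$. Letting $t\to 0^+$ and invoking \eqref{directional derivative} yields the first branch of \eqref{eq_dir_der_gpcf}.

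\textbf{Main obstacle.} The only delicate point is justifying that the maximum in the difference quotient is governed solely by the active indices $\Theta(x)$ once $t$ is small, i.e. that the inactive affine pieces do not interfere in the limit. This is a routine finiteness argument—there are finitely many indices $k\notin\Theta(x)$, each with a strict gap $f(x)-\langle v_k^*,x\rangle-\beta_k>0$, so a uniform threshold $\bar t$ works—but it is the step that genuinely uses the piecewise-affine (polyhedral) structure rather than mere convexity. Everything else follows mechanically from the definition \eqref{directional derivative} and the tangent-cone formula $T_{{\rm dom}\,f}(x)={\rm cone}\,({\rm dom}\,f-x)$.
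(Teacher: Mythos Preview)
Your proposal is correct and follows essentially the same approach as the paper's own proof: both split into the two cases according to whether $h\in T_{{\rm dom}\,f}(x)={\rm cone}\,({\rm dom}\,f-x)$, dispose of the $+\infty$ case immediately, and then use a finiteness argument to show that for sufficiently small $t>0$ the maximum in $f(x+th)$ is attained only at indices $k\in\Theta(x)$. The paper carries out the threshold construction slightly more explicitly (fixing one $k_0\in\Theta(x)$ and finding a separate $\delta_\ell$ for each inactive $\ell$), but the mathematical content is identical to what you describe.
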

\begin{proof} Since ${\rm dom}\,f$ is a nonempty generalized polyhedral convex set, by \cite[Proposition~2.19]{Luan_Yao_Yen_2016} one has $T_{{\rm dom}\,f}(x)={\rm cone}\,[({\rm dom}\,f) -x]$. 

If $h\notin T_{{\rm dom}\,f}(x)$, then $x+th \notin {\rm dom}\,f$ for every $t>0$. So, $f'(x;h)=+\infty$. 

If $h \in T_{{\rm dom}\,f}(x)$, then there exists $\delta_0 >0$ such that $x+th \in {\rm dom}\,f$ for all $t \in [0, \delta_0]$. Therefore, for every $t \in [0, \delta_0]$, by \eqref{eq_rep_gcpf} one has
\begin{equation}\label{dir_der_1}
f(x+th)=\max\left\{\langle v_k^*, x \rangle + \beta_k +t \langle v_k^*, h \rangle \mid  k=1,\dots,m\right\}.
\end{equation}
Select an index $k_0 \in \Theta(x)$. For any $\ell \notin \Theta(x)$, as $f(x)=\langle v_{k_0}^*, x \rangle + \beta_{k_0} >\langle v_{\ell}^*, x \rangle + \beta_{\ell}$, there must exists $\delta_{\ell} > 0$ satisfying
\begin{equation}\label{dir_der_2}
\langle v_{k_0}^*, x \rangle + \beta_{k_0} +t \langle v_{k_0}^*, h \rangle > \langle v_{\ell}^*, x \rangle + \beta_{\ell} +t \langle v_{\ell}^*, h \rangle \quad (t \in [0, \delta_{\ell}]).
\end{equation}
Choose $\delta>0$ such that $\delta \leq \delta_0$ and $\delta \leq \delta_{\ell}$ for all $\ell \notin \Theta(x)$. Then, for every $t \in [0, \delta]$, from \eqref{dir_der_1} and \eqref{dir_der_2} it follows that
\begin{equation}\label{dir_der_3}
\begin{aligned}
f(x+th)&\geq \langle v_{k_0}^*, x \rangle + \beta_{k_0} +t \langle v_{k_0}^*, h \rangle \\
&> \max\left\{\langle v_{\ell}^*, x \rangle + \beta_{\ell} +t \langle v_{\ell}^*, h \rangle \mid  \ell \notin \Theta(x)\right\}.
\end{aligned}
\end{equation}
Thus, combining \eqref{dir_der_1} with \eqref{dir_der_3}, we have
\begin{equation*}
\begin{aligned}
f(x+th)&=\max\left\{\langle v_k^*, x \rangle + \beta_k +t \langle v_k^*, h \rangle \mid  k \in \Theta(x)\right\}\\
&=\max\left\{f(x) +t \langle v_k^*, h \rangle \mid  k \in \Theta(x)\right\}\\
&=f(x)+
t\max\left\{\langle v_k^*, h \rangle \mid  k \in \Theta(x)\right\}
\end{aligned}
\end{equation*}
for all $t \in [0, \delta]$. It follows that $f'(x;h)=\max\left\{\langle v_k^*, h \rangle \mid  k \in \Theta(x)\right\}$. We have thus proved formula \eqref{eq_dir_der_gpcf}. $\hfill\Box$
\end{proof}

\noindent
{\it Proof of Theorem~\ref{optim_cond_4}} Let $x \in D \cap {\rm dom}\,f$. First, to prove the sufficiency, suppose that~\eqref{eq_optimal_cond_2} is fulfilled. Then there exist  nonnegative numbers $\lambda_k$, $\mu_{1,i}$, $\mu_{2,j}$, for $k \in \Theta(x)$, $i \in I(x)$, $j \in J(x)$, and an element $u^* \in ({\rm ker}\,A \cap {\rm ker}\,B)^{\perp}$ such that 
$\sum\limits_{k \in \Theta(x)}\lambda_k=1$ and
\begin{equation}\label{sum_0}
\sum\limits_{k \in \Theta(x)}\lambda_k v_k^*+\sum\limits_{i \in I(x)} \mu_{1,i} x_i^* + \sum\limits_{j \in J(x)}\mu_{2,j} u_j^* + u^*=0.
\end{equation}
For any $h \in T_{D}(x)$, we have $f'(x;h) \geq 0$. Indeed, if $h \notin T_{{\rm dom}\,f}(x)$, then $f'(x;h)=+\infty$ by \eqref{eq_dir_der_gpcf}. If $h \in T_{{\rm dom}\,f}(x)$, then 
\begin{equation}\label{h_inclusion}
h \in {\rm cone}\, ((D \cap {\rm dom}\, f) -x).
\end{equation}
To prove \eqref{h_inclusion}, we can argue as follows. Since $D$ and ${\rm dom}\, f$ are gpcs, thanks to \cite[Proposition~2.19]{Luan_Yao_Yen_2016}, we have $T_D(x)={\rm cone}(D-x)$ and $T_{{\rm dom}\,f}(x)={\rm cone}[({\rm dom}\,f)-x].$ This implies that $h \in {\rm cone}(D-x) \cap {\rm cone}[({\rm dom}\,f)-x]={\rm cone}\, ((D \cap {\rm dom}\, f) -x).$ So, \eqref{h_inclusion} is valid. To proceed furthermore, from 
\begin{equation*}
D \cap {\rm dom}\, f =\left\{ x \in X \mid Ax=y,\, Bx=z,\, \langle x^*_i, x \rangle \leq \alpha_i,\,  i \in I, \,\langle u^*_j, x \rangle \leq \gamma_j,\,  j \in J\right\},
\end{equation*}
we deduce that
\begin{equation}\label{rep_cone_D_cap_domf}
\begin{aligned}
{\rm cone}\, ((D \cap {\rm dom}\, f) -x)=\big\{ u \in X \mid & \; Au=0, \,  \langle x^*_i, u \rangle \leq 0,\,  i \in I(x), \\
&\; Bu=0, \, \langle u^*_j, u \rangle \leq 0,\,  j \in J(x) \big\}.
\end{aligned}
\end{equation}
Thus, by \eqref{h_inclusion} one has $\langle x^*_i, h \rangle \leq 0$ for every $i \in I(x)$, $\langle u^*_j, h \rangle \leq 0$ for all $j \in J(x)$, and $h \in {\rm ker}\,A \cap {\rm ker}\,B$. Since $h \in T_{{\rm dom}\,f}(x)$, one has $f'(x;h) =\max\{\langle v_k^*, h \rangle \mid  k \in \Theta(x)\}$ by Lemma~\ref{dir_der_gpcf}. Therefore, using the equality $\sum\limits_{k \in \Theta(x)}\lambda_k=1$ and \eqref{sum_0}, we have
\begin{equation*}
\begin{aligned}
f'(x;h) &\geq \sum\limits_{k \in \Theta(x)}\lambda_k \langle v_k^*, h \rangle\\
&= \left\langle -\left(\sum\limits_{i \in I(x)} \mu_{1,i} x_i^* + \sum\limits_{j \in J(x)}\mu_{2,j} u_j^* + u^*\right), h \right\rangle\\
&=\sum\limits_{i \in I(x)} \mu_{1,i} (-\left\langle x_i^*, h \right\rangle)+\sum\limits_{j \in J(x)} \mu_{2,j} (-\left\langle u_j^*, h \right\rangle)+\left\langle u^*, h \right\rangle \geq 0.
\end{aligned}
\end{equation*}
We have proved $f'(x;h) \geq 0$ for every $h \in T_{D}(x)$. Hence, $x$ is a solution of ${\rm (}{\mathcal{P}}{\rm )}$.

Now, to prove the necessity, denote the set on the right-hand side of \eqref{eq_optimal_cond_2} by $Q$ and suppose that $0 \notin Q$. We need to show that $x \notin {\rm Sol}{\rm (}{\mathcal{P}}{\rm )}$. Due to \cite[Theorem~2.7]{Luan_Yen_2015}, $Q$ is a nonempty gpcs. In particular, $Q$ is convex and weakly$^*$-closed. Since $0 \notin Q$, by the strong separation theorem \cite[Theorem~3.4(b)]{Rudin_1991} we can find $v \in X$ and $\gamma \in \mathbb{R}$ such that 
\begin{equation}\label{separation_Q_2}
\sup\limits_{x^* \in Q}\, \langle x^*,v \rangle < \gamma <\langle 0,v \rangle.
\end{equation}

On one hand, the first inequality in \eqref{separation_Q_2} implies that the linear functional $\langle \cdot,v\rangle $ is bounded from above on~$Q$. Hence, by \cite[Theorem~3.3]{Luan_Yen_2015}, the generalized linear programming problem
$\max\{\langle x^*,v \rangle \mid  x^* \in Q\}$ has a solution. Then we have  $\langle v^*, v \rangle \leq 0$ for all $v^* \in 0^+Q$ (see \cite[Proposition~3.5]{Luan_Yen_2015}). On the other hand, \eqref{eq_optimal_cond_2} yields
$$0^+Q={\rm cone}\,\{x^*_i\mid   i \in I(x)\}+{\rm cone}\,\{u^*_j \mid  j \in J(x)\}+ ({\rm ker}\,A \cap {\rm ker}\,B)^{\perp}.$$  
Therefore, $\langle x^*_i, v \rangle \leq 0$ for every $i\in I(x)$, $\langle u^*_j, v \rangle \leq 0$ for all $j \in J(x)$, and $v$ belongs to $(({\rm ker}\,A \cap {\rm ker}\,B)^{\perp})^{\perp}$. Since ${\rm ker}\,A \cap  {\rm ker}\,B$ is a closed linear subspace of $X$, applying \cite[Proposition~2.40]{Bonnans_Shapiro_2000}, one has $(({\rm ker}\,A \cap {\rm ker}\,B)^{\perp})^{\perp}={\rm ker}\,A \cap  {\rm ker}\,B.$ Consequently, formula \eqref{rep_cone_D_cap_domf} allows us to have $v \in {\rm cone}\, ((D \cap {\rm dom}\, f) -x)$, i.e., $v \in T_{D}(x) \cap T_{{\rm dom}\,f}(x)$. Hence, by Lemma~\ref{dir_der_gpcf} one has $f'(x;v)=\max\{\langle v_k^*, v \rangle \mid  k \in \Theta(x)\}$. 

For every $k \in \Theta(x)$, since $v_k^* \in Q$,  the inequalities in \eqref{separation_Q_2} yield $\langle v_k^*, v \rangle < \gamma <0$. Consequently, $f'(x;v)=\max\{\langle v_k^*, v \rangle \mid  k \in \Theta(x)\} < \gamma <0$. So, we have $x \notin {\rm Sol}{\rm (}{\mathcal{P}}{\rm )}$.

The proof is complete.
$\hfill\Box$

\section{Duality}
\setcounter{equation}{0}

In this final section, we will use the general conjugate duality scheme presented in \cite[pp.~107--108]{Bonnans_Shapiro_2000} to construct a dual problem for  ${\rm (}{\mathcal{P}}{\rm )}$ and obtain several duality theorems.

\medskip
If we define $F: X \to \bar{\mathbb{R}}$ and  $G: X \to X$, respectively, by $F(\cdot)=\delta(\cdot, D)$ and $G(x)=~x$, then problem ${\rm (}{\mathcal{P}}{\rm )}$ can be rewritten as
\begin{equation*}
{\rm (}{\widetilde{\mathcal{P}}}{\rm )} \qquad \min  \left\{ f(x)+F(G(x)) \mid x \in X\right\}. 
\end{equation*}
By the conjugate duality scheme in \cite[formulas~(2.298) and~(2.296)]{Bonnans_Shapiro_2000}, we obtain the following \textit{dual problem} of ${\rm (}{\widetilde{\mathcal{P}}}{\rm )}$:   
\begin{equation*}
{\rm (}{\widetilde{\mathcal{D}}}{\rm )} \qquad \max\limits \Big\{ \inf\limits_{x \in X} L(x, x^*) - F^*(x^*) \mid x^* \in X^*\Big\},
\end{equation*} where $L(x, x^*):=f(x)+\langle x^*, G(x) \rangle$ is the \textit{standard Lagrangian} of ${\rm (}{\widetilde{\mathcal{P}}}{\rm )}$. On one hand, it holds that $F^*(x^*)=\delta^*(\cdot, D) (x^*)$, where $\delta^*(\cdot, D) (x^*)=\sup\limits_{x \in D}\, \langle x^*, x \rangle$ is the \textit{support function} of $D$.  On the other hand,
\begin{equation*}
\begin{aligned}
\inf\limits_{x \in X} L(x, x^*) &= \inf\limits_{x \in X} \big( f(x)+\langle x^*, G(x) \rangle \big)= \inf\limits_{x \in X} \big( f(x)+\langle x^*, x \rangle \big)\\
&=-\sup\limits_{x \in X} \big( \langle -x^*, x \rangle -f(x) \big)=-f^*(-x^*).
\end{aligned}
\end{equation*}
Therefore, ${\rm (}{\widetilde{\mathcal{D}}}{\rm )}$  is nothing than the following problem
\begin{equation*}
{\rm (}{\mathcal{D}}{\rm )} \qquad \max\, \{g(x^*) \mid x^* \in X^*\}
\end{equation*} with $g(x^*):=\inf\limits_{x \in X} L(x, x^*) - F^*(x^*)=-f^*(-x^*)-\delta^*(\cdot, D) (x^*)$. 
Since $f$ and $\delta(\cdot, D)$ are proper generalized polyhedral convex functions, by \cite[Theorem~4.12]{Luan_Yao_Yen_2016} we can assert that $f^*$ and $\delta^*(\cdot, D)$ are proper generalized polyhedral convex functions. Hence, in particular, $x^*\mapsto f^*(-x^*)$ is a proper generalized polyhedral convex the function. If  $(-{\rm dom}\, f^*) \cap {\rm dom}\, \delta^*(\cdot, D)\neq\emptyset$, then $-g$ is a  proper generalized polyhedral convex function by \cite[Theorem~3.7]{Luan_Yao_Yen_2016}. So, the objective function of the maximization problem ${\rm (}{\mathcal{D}}{\rm )}$ is generalized polyhedral concave.  If $(-{\rm dom}\, f^*)\cap {\rm dom}\, \delta^*(\cdot, D)=\emptyset$, then $(-g)(x^*)=+\infty$ for all $x^* \in X^*$. In this case, the objective function of ${\rm (}{\mathcal{D}}{\rm )}$ is also generalized polyhedral concave. 

\medskip
A \textit{weak duality} relationship between ${\rm (}{\mathcal{P}}{\rm )}$ and ${\rm (}{\mathcal{D}}{\rm )}$ can be described as follows.
\begin{theorem}\label{Weak_duality_thm} {\rm (Weak duality theorem)} For every $u \in D$ and $u^* \in X^*$, the inequality $ g(u^*)\leq f(u)$ holds. Hence, if $f(u) = g(u^*)$, then $u \in   {\rm Sol}{\rm (}{\mathcal{P}}{\rm )}$ and $u^* \in {\rm Sol}{\rm (}{\mathcal{D}}{\rm )}$.
\end{theorem}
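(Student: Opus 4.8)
The plan is to obtain the inequality $g(u^*)\le f(u)$ as a direct consequence of the Fenchel--Young inequality combined with the trivial lower bound for the support function of $D$; no deep machinery is needed. First I would unfold the dual objective as $g(u^*)=-f^*(-u^*)-\delta^*(\cdot,D)(u^*)$. From the very definition of the conjugate, $f^*(-u^*)=\sup_{x\in X}\big[\langle -u^*,x\rangle-f(x)\big]\ge \langle -u^*,u\rangle-f(u)$, which rearranges into the Fenchel--Young inequality $-f^*(-u^*)\le f(u)+\langle u^*,u\rangle$. Second, since $u\in D$, the support function obeys $\delta^*(\cdot,D)(u^*)=\sup_{x\in D}\langle u^*,x\rangle\ge\langle u^*,u\rangle$, hence $-\delta^*(\cdot,D)(u^*)\le -\langle u^*,u\rangle$.

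Adding the two estimates, the terms $\pm\langle u^*,u\rangle$ cancel and I get
\[
g(u^*)=-f^*(-u^*)-\delta^*(\cdot,D)(u^*)\le\big(f(u)+\langle u^*,u\rangle\big)-\langle u^*,u\rangle=f(u),
\]
which is exactly the weak duality inequality, valid for every $u\in D$ and every $u^*\in X^*$. For the second assertion I would argue as follows. The inequality just proved says $g(x^*)\le f(x)$ for all $x\in D$ and all $x^*\in X^*$. Assuming $f(u)=g(u^*)$, the chain $f(u)=g(u^*)\le f(x)$ for every $x\in D$ shows that $u$ minimizes $f$ over $D$, while $g(x^*)\le f(u)=g(u^*)$ for every $x^*\in X^*$ shows that $u^*$ maximizes $g$ over $X^*$; thus $u\in{\rm Sol}{\rm (}{\mathcal P}{\rm )}$ and $u^*\in{\rm Sol}{\rm (}{\mathcal D}{\rm )}$.

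The argument is essentially routine, so there is no genuine obstacle; the only point demanding care is the \emph{finiteness bookkeeping} hidden in the definition of a solution of ${\rm (}{\mathcal P}{\rm )}$, which requires $f(u)$ to be a real number. Here I would observe that, because $f$ is proper, one has $f(u)>-\infty$ and, picking any $x_0\in{\rm dom}\,f$, $f^*(-u^*)\ge\langle -u^*,x_0\rangle-f(x_0)>-\infty$; because $D$ is nonempty, $\delta^*(\cdot,D)(u^*)\ge\langle u^*,u\rangle>-\infty$. Consequently both $-f^*(-u^*)$ and $-\delta^*(\cdot,D)(u^*)$ lie in $[-\infty,+\infty)$, so their sum $g(u^*)$ is $<+\infty$; the equality $f(u)=g(u^*)$ then pins the common value in $\mathbb R$, legitimizing the conclusion that $u$ and $u^*$ are genuine solutions.
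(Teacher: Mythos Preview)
Your proof is correct and follows essentially the same route as the paper: the paper also writes $g(u^*)=-f^*(-u^*)-\delta^*(\cdot,D)(u^*)=\inf_{x\in X}[\langle u^*,x\rangle+f(x)]-\sup_{x\in D}\langle u^*,x\rangle$ and bounds both terms by evaluating at $x=u$, exactly your two inequalities. Your additional finiteness bookkeeping for the second assertion is more careful than the paper, which simply declares that the displayed inequality ``justifies the assertions of the theorem.''
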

\begin{proof} Given any $u \in D$ and $u^* \in X^*$, it suffices to observe that
\begin{equation}\label{rep_g}
\begin{aligned}
g(u^*) & = -f^*(-u^*)-\delta^*(\cdot, D) (u^*)\\
& = \inf\limits_{x \in X} \big[\langle u^*, x \rangle + f(x)\big] - \sup\limits_{x \in D} \langle u^*, x \rangle \\
&\leq \langle u^*, u \rangle + f(u) - \langle u^*, u \rangle =f(u).
\end{aligned}
\end{equation}
This justifies the assertions of the theorem. $\hfill\Box$
\end{proof}

Since the existence of an element $u^*$ satisfying $u^* \in N_D(u) \cap (-\partial f(u))$ is equivalent to the property $0\in \partial f(u)+N_D(u)$, the next statement can be interpreted as a sufficient optimality condition for ${\rm (}{\mathcal{P}}{\rm )}$  and ${\rm (}{\mathcal{D}}{\rm )}$.
\begin{proposition}\label{Duality_prop} If $u \in X$ and $u^* \in N_D(u) \cap (-\partial f(u))$, then one has $u \in {\rm Sol}{\rm (}{\mathcal{P}}{\rm )}$ and $u^* \in {\rm Sol}{\rm (}{\mathcal{D}}{\rm )}$. Moreover, the optimal values of ${\rm (}{\mathcal{P}}{\rm )}$ and ${\rm (}{\mathcal{D}}{\rm )}$ are equal. 
\end{proposition}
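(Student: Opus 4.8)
The plan is to derive directly the value equality $g(u^*)=f(u)$ from the hypothesis and then to read off both optimality claims and the zero-duality-gap statement from the weak duality theorem (Theorem~\ref{Weak_duality_thm}).

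First I would translate the two memberships into equalities for the support function and the conjugate function. The inclusion $u^* \in N_D(u)$ presupposes $u \in D$ and gives $\langle u^*, x-u \rangle \leq 0$ for all $x \in D$; hence $\sup_{x \in D}\langle u^*, x \rangle = \langle u^*, u \rangle$, that is, $\delta^*(\cdot, D)(u^*)=\langle u^*, u \rangle$, with the supremum attained at $u$. In parallel, the inclusion $-u^* \in \partial f(u)$ forces $u \in {\rm dom}\,f$, so that $f(u)$ is finite ($f$ being proper), and by the Fenchel--Young criterion recalled before Lemma~\ref{sum_rules} (namely $x^* \in \partial f(x)$ iff $f(x)+f^*(x^*)=\langle x^*, x \rangle$) it yields
\[
f(u)+f^*(-u^*)=\langle -u^*, u \rangle = -\langle u^*, u \rangle.
\]

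Next I would substitute these two identities into the formula $g(u^*)=-f^*(-u^*)-\delta^*(\cdot, D)(u^*)$ obtained in the construction of ${\rm (}{\mathcal{D}}{\rm )}$. This produces
\[
g(u^*)=-\big(-\langle u^*, u \rangle - f(u)\big)-\langle u^*, u \rangle = f(u).
\]
With the equality $f(u)=g(u^*)$ established, the second assertion of Theorem~\ref{Weak_duality_thm} immediately gives $u \in {\rm Sol}{\rm (}{\mathcal{P}}{\rm )}$ and $u^* \in {\rm Sol}{\rm (}{\mathcal{D}}{\rm )}$. Finally, since $u$ solves ${\rm (}{\mathcal{P}}{\rm )}$ with value $f(u)$ and $u^*$ solves ${\rm (}{\mathcal{D}}{\rm )}$ with value $g(u^*)$, the common number $f(u)=g(u^*)$ is exactly the assertion that the optimal values of the two problems coincide.

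I do not expect a genuine obstacle here: the argument is a short chain of substitutions resting on weak duality. The only points needing care are the bookkeeping of signs (the dual variable enters $f^*$ as $-u^*$, not $u^*$) and the verification that $f(u)$ is finite, which is guaranteed because $\partial f(u)\neq\emptyset$ forces $u\in{\rm dom}\,f$ while properness of $f$ rules out the value $-\infty$.
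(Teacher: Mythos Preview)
Your proof is correct and follows essentially the same line as the paper's own argument: both translate $u^*\in N_D(u)$ into $\delta^*(\cdot,D)(u^*)=\langle u^*,u\rangle$, translate $-u^*\in\partial f(u)$ via the Fenchel--Young equality into $f(u)+f^*(-u^*)=-\langle u^*,u\rangle$, substitute into the definition of $g$ to obtain $g(u^*)=f(u)$, and then invoke Theorem~\ref{Weak_duality_thm}. The only cosmetic difference is that the paper cites \cite[Theorem~2.4.2(iii)]{Zalinescu_2002} for the Fenchel--Young identity, whereas you point to the equivalent statement recalled earlier in the preliminaries.
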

\begin{proof}
Suppose that $u \in X$ and $u^* \in N_D(u) \cap (-\partial f(u))$. Then $u \in D \cap {\rm dom}\,f$. On one hand, since $-u^* \in \partial f(u)$, by \cite[Theorem~2.4.2(iii)]{Zalinescu_2002} we can assert that $$f(u)+f^*(-u^*)=\langle-u^*, u \rangle.$$ So, $-f^*(-u^*)=f(u)+\langle u^*, u \rangle.$ On the other hand, the inclusion $u^* \in N_D(u)$ implies that $\sup\{\langle u^*, x \rangle \mid x \in D\}=\langle u^*, u \rangle;$ hence $\delta^*(\cdot, D) (u^*)=\langle u^*, u \rangle$. Consequently, $$g(u^*)= -f^*(-u^*)-\delta^*(\cdot, D) (u^*)=f(u).$$ Thus, the desired conclusions follow from Theorem \ref{Weak_duality_thm}. $\hfill\Box$
\end{proof}	

If the optimal value of ${\rm (}{\mathcal{D}}{\rm )}$ equals to the optimal value of ${\rm (}{\mathcal{P}}{\rm )}$, then one says that the \textit{strong duality} relationship among the dual pair holds.  We are going to show that if either $f$ is polyhedral convex or $D$ is polyhedral convex, then this property is available under a mild condition.
\begin{theorem}\label{Strong_dual_thm_1} {\rm (Strong duality theorem I)} Assume that either $f$ is a proper polyhedral convex function and $D$ is a nonempty generalized polyhedral convex set, or $f$ is a proper generalized polyhedral convex function and $D$ is a nonempty polyhedral convex set. If one of the two problems has a solution, then both of them have solutions and the optimal values are equal. 
\end{theorem}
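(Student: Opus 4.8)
The plan is to reduce both implications of the theorem to Proposition~\ref{Duality_prop} combined with the \emph{closure-free} optimality condition of Theorem~\ref{Optimality_condition_2_pc}, which is available precisely because, by hypothesis, either $f$ is polyhedral convex or $D$ is polyhedral convex. I would argue by cases according to which of the two problems is assumed to possess a solution. The primal-solvable case is the substantive one, and the dual-solvable case will be bootstrapped to it via weak duality and the Frank--Wolfe-type existence theorem.

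First I would assume that ${\rm (}{\mathcal{P}}{\rm )}$ has a solution $u$, so that $u\in D\cap{\rm dom}\,f$. Under either alternative of the hypothesis, Theorem~\ref{Optimality_condition_2_pc} applies and yields $0\in\partial f(u)+N_D(u)$ with \emph{no} closure sign. Writing $0=w^*+z^*$ with $w^*\in\partial f(u)$ and $z^*\in N_D(u)$, and setting $u^*:=-w^*=z^*$, I obtain $u^*\in N_D(u)\cap(-\partial f(u))$. Proposition~\ref{Duality_prop} then gives immediately that $u\in{\rm Sol}{\rm (}{\mathcal{P}}{\rm )}$, that $u^*\in{\rm Sol}{\rm (}{\mathcal{D}}{\rm )}$, and that the two optimal values coincide. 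Thus solvability of ${\rm (}{\mathcal{P}}{\rm )}$ forces solvability of ${\rm (}{\mathcal{D}}{\rm )}$ with equal optimal values, the sole role of the polyhedrality hypothesis being to remove the closure sign in the optimality condition.

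Next I would assume that ${\rm (}{\mathcal{D}}{\rm )}$ has a solution $u^*$, with $g(u^*)\in\mathbb{R}$. By the weak duality Theorem~\ref{Weak_duality_thm}, $f(x)\ge g(u^*)$ for all $x\in D$, so $f$ is bounded below on $D$ by the real number $g(u^*)$. As soon as $D\cap{\rm dom}\,f\neq\emptyset$, the Frank--Wolfe-type Theorem~\ref{FW_exist_thm} produces a solution of ${\rm (}{\mathcal{P}}{\rm )}$, and then the previous paragraph applies verbatim to conclude that both problems are solvable with equal optimal values (consistent with the given $u^*$). Hence the entire statement follows by combining Theorems~\ref{FW_exist_thm}, \ref{Optimality_condition_2_pc} and~\ref{Weak_duality_thm} with Proposition~\ref{Duality_prop}.

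The step I expect to require the most care is ensuring $D\cap{\rm dom}\,f\neq\emptyset$ in the dual-solvable case; otherwise Theorem~\ref{FW_exist_thm} is inapplicable and the primal is infeasible. I would rule out $D\cap{\rm dom}\,f=\emptyset$ by contradiction. Under our hypotheses the set $D-{\rm dom}\,f$ is polyhedral convex, hence closed, by \cite[Proposition~2.11]{Luan_Yao_Yen_2016}, and disjointness gives $0\notin D-{\rm dom}\,f$; the strong separation theorem \cite[Theorem~3.4(b)]{Rudin_1991} then furnishes a functional strictly separating $\{0\}$ from $D-{\rm dom}\,f$. Perturbing the dual solution $u^*$ along a suitable multiple of this functional, and using the sublinearity of the support function $\delta^*(\cdot,D)$ together with the finiteness of $f^*(-u^*)$ and $\delta^*(\cdot,D)(u^*)$ at $u^*$, I would obtain $g\to+\infty$, contradicting the maximality of $u^*$. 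This escape-to-infinity estimate, which rests on the closedness of $D-{\rm dom}\,f$ guaranteed by the polyhedrality assumption, is the main technical obstacle; the remainder of the proof is a direct assembly of the cited results.
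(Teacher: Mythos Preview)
Your proposal is correct and follows essentially the same route as the paper's own proof. In both arguments the primal-solvable case is settled by combining Theorem~\ref{Optimality_condition_2_pc} with Proposition~\ref{Duality_prop}, while the dual-solvable case is handled by first proving $D\cap{\rm dom}\,f\neq\emptyset$ via a strong-separation argument applied to the closed set $({\rm dom}\,f)-D$ (equivalently $D-{\rm dom}\,f$) followed by an ``escape-to-infinity'' perturbation $u^*\mapsto u^*+\lambda x^*$ showing $g(u^*+\lambda x^*)\geq g(u^*)+\lambda\varepsilon$, and then invoking weak duality plus Theorem~\ref{FW_exist_thm} to reduce to the first case.
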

\begin{proof} Under the assumptions of the theorem, we suppose firstly that ${\rm (}{\mathcal{P}}{\rm )}$ has a solution $u$. Then, according to Theorem \ref{Optimality_condition_2_pc}, it holds that $0 \in \partial f(u) + N_D(u)$. Hence there exists $u^* \in N_D(u) \cap (-\partial f(u))$. Applying Proposition \ref{Duality_prop} yields the solution existence of ${\rm (}{\mathcal{D}}{\rm )}$ and the equality of the optimal values.

Secondly, suppose that ${\rm (}{\mathcal{D}}{\rm )}$ has a solution $u^*$. Since $f$ is a proper gpcf, ${\rm dom}\,f$ is a nonempty gpcs by \cite[Theorem~3.2]{Luan_Yao_Yen_2016}. If $f$ is a proper pcf then, also by \cite[Theorem~3.2]{Luan_Yao_Yen_2016},  ${\rm dom}\,f$ is a nonempty pcs. Thus, by the assumptions of the theorem, ${\rm dom\,}f$ and $-D$ are gpcs, and one of them is polyhedral convex. Hence, in accordance with Proposition~2.11 from \cite{Luan_Yao_Yen_2016}, the set $({\rm dom}\,f)-D$ is polyhedral convex. In particular, $({\rm dom}\,f)-D$ is a closed set. Let us show that $D \cap {\rm dom}\,f$ is nonempty. On the contrary, suppose that $D \cap {\rm dom}\,f=\emptyset$. Hence, $0 \notin ({\rm dom}\, f)-D$. Since the nonempty set $({\rm dom}\, f)-D$ is closed, by the strong separation theorem \cite[Theorem~3.4(b)]{Rudin_1991} there exist $x^* \in X^*$ and real number~$\varepsilon$ such that 
$0 < \varepsilon < \langle x^*, x-u \rangle$ for all $x \in {\rm dom}\,f$ and $u \in D$. 
Consequently, 
\begin{equation}\label{diff_1}
\varepsilon + \sup\limits_{u \in D}\langle x^*, u \rangle \leq   \inf\limits_{x \in {\rm dom}\,f}\langle x^*, x \rangle.
\end{equation}
On one hand, for any $\lambda >0$, using the equalities in \eqref{rep_g} and the inequality \eqref{diff_1} we have
\begin{equation*}
\begin{aligned}
&g(u^*+\lambda x^*) \\
& =\inf\limits_{x \in X}[\langle u^*+\lambda x^*, x \rangle+f(x)]- \sup\limits_{x \in D}\langle u^*+\lambda x^*, x \rangle\\
&=\inf\limits_{x \in {\rm dom}\,f}[\langle u^*+\lambda x^*, x \rangle+f(x)]- \sup\limits_{u \in D}\langle u^*+\lambda x^*, u \rangle\\
&\geq \inf\limits_{x \in {\rm dom}\,f}[\langle u^*, x \rangle+f(x)] + \lambda \inf\limits_{x \in {\rm dom}\,f}\langle x^*, x \rangle 
-\sup\limits_{u \in D}\langle u^*, u \rangle - \lambda \sup\limits_{u \in D}\langle x^*, u \rangle\\
&=\left(\inf\limits_{x \in {\rm dom}\,f}[\langle u^*, x \rangle+f(x)] - \sup\limits_{u \in D}\langle u^*, u \rangle\right) + \lambda\left[\inf\limits_{x \in {\rm dom}\,f}\langle x^*, x \rangle -\sup\limits_{u \in D}\langle x^*, u \rangle\right]\\
&\geq g(u^*)+ \lambda \varepsilon.
\end{aligned}
\end{equation*} 
On the other hand, since $u^*$ is a solution of ${\rm (}{\mathcal{D}}{\rm )}$, the estimate $g(u^*) \geq g(u^*+\lambda x^*)$ is valid. Hence, $g(u^*) \geq  g(u^*)+ \lambda \varepsilon$. This contradicts the fact that $\lambda, \varepsilon$ are positive numbers. Thus, we have proved that $({\rm dom}\,f) \cap D \neq \emptyset$. Setting $\gamma=g(u^*)$ and applying Theorem~\ref{Weak_duality_thm}, we obtain $f(x) \geq\gamma$ for all $x\in D$. Therefore, on account of  Theorem \ref{FW_exist_thm}, we can assert that ${\rm (}{\mathcal{P}}{\rm )}$ has a solution. Finally, to show that the optimal values of ${\rm (}{\mathcal{P}}{\rm )}$ and ${\rm (}{\mathcal{D}}{\rm )}$ are equal, it suffices to use the result already obtained in the first part of this proof.  $\hfill\Box$
\end{proof}

\begin{example} Consider problem ${\rm (}{\mathcal{P}}{\rm )}$ in the setting and notations of Example \ref{Example1_gpcp}.  To have a concrete form of the dual problem ${\rm (}{\mathcal{D}}{\rm )}$, we have to find the function $g$. Suppose that $x^* \in X^*$ and $|g(x^*)| < \infty$. Since $f$ is proper, for $\alpha:=\inf\limits_{x \in X}\, [f(x)+\langle x^*, x \rangle]$, we have $\alpha<+\infty$. In addition, as $D$ is nonempty,  the number $\beta:=\sup\limits_{x \in D}\,\langle x^*, x \rangle$ is greater than~$-\infty$. Thus, the equalities in \eqref{rep_g} yield $+\infty>\alpha=g(x^*)+\beta>-\infty.$ Hence, both~$\alpha$ and $\beta$ are finite. In particular, the function $x\mapsto f(x)+\langle x^*, x \rangle$ is bounded from below on $X$. Since $f(x) = \max\{\langle v^*_1,x\rangle+1, \langle v^*_2,x\rangle \}$, one see that
		\begin{equation}\label{rep_sum_f_xstar}
		f(\cdot)+\langle x^*, \cdot \rangle = \max\left\{\langle v^*_1+x^*, \cdot \rangle+1, \langle v^*_2+x^*, \cdot\rangle\right\}
		\end{equation}
is a polyhedral convex function. So, according to Theorem \ref{FW_exist_thm}, the generalized polyhedral convex optimization problem $\min \left\{ f(x) + \langle x^*, x \rangle \mid x \in X\right\}$ has a solution. Therefore, by \eqref{rep_sum_f_xstar} and Corollary~\ref{solution_existence_unconstrained}, we must have $0 \in {\rm conv}\,\{v_1^*+x^*, v^*_2+x^*\}$. Let $\lambda_1\geq~0,$ $\lambda_2\geq 0$ be such that $\lambda_1+\lambda_2=1$ and $\lambda_1(v_1^*+x^*)+\lambda_2(v_2^*+x^*)=0$. It is clear that 
		\begin{equation*}
		\begin{aligned}
		x^*&=-\lambda_1v_1^*-\lambda_2v_2^*=-\lambda_1(x^*_1-x^*_2)-\lambda_2(-x^*_1-x^*_2)\\
		&=(1-2\lambda_1)x^*_1+x^*_2.
		\end{aligned}
		\end{equation*}
Writing $\lambda=1-2\lambda_1$, we obtain $x^*=\lambda x^*_1 + x^*_2$ with $\lambda \in [-1,1]$. It is a simple matter to verify that $e_0+t_1e_1\in D$ for all $t_1 \leq 1$. Hence, $$\sup\limits_{x \in D}\,\langle x^*, x \rangle  \geq \sup\limits_{t_1 \leq 1 }\, \langle x^*, e_0+t_1e_1 \rangle  = \sup\limits_{t_1 \leq 1 }  \lambda t_1.$$ If $\lambda < 0$, then $\sup\limits_{t_1 \leq 1 }  \lambda t_1 = + \infty.$ So, we get $\sup\limits_{x \in D}\, \langle x^*, x \rangle=+\infty$, which contradicts the fact that $\beta\in\mathbb R$. Thus we have proved that if $g(x^*)$ is finite, then there exists $\lambda \in [0,1]$ satisfying $x^*=\lambda x^*_1 + x^*_2$. Let us compute the value  $g(x^*)$ when $|g(x^*)| < \infty$. Suppose that  $x^*=\lambda x^*_1 + x^*_2$ with $\lambda \in [0,1]$. For every $x \in D$, since $\langle x^*_1, x \rangle \leq 1$ and $\langle x^*_2, x \rangle \leq 2$, one has $\langle x^*, x \rangle  = \lambda \langle x^*_1, x \rangle + \langle x^*_2, x \rangle \leq \lambda+2$. In addition, as $e_0+e_1+2e_2 \in D$ with $\langle x^*, e_0+e_1+2e_2  \rangle= \lambda+2$, we must have  $\sup\limits_{x \in D}\,\langle x^*, x \rangle=\lambda+2$. From \eqref{rep_sum_f_xstar} we see that
		\begin{equation*}\label{rep_sum_f_xstar_2}
		f(x)+\langle x^*, x \rangle = \max\left\{(\lambda +1)\langle x^*_1,x \rangle +1,  (\lambda -1)\langle x^*_1,x \rangle\right\}.
		\end{equation*}
If $\langle x^*_1,x \rangle  \geq -\frac{1}{2}$, then 
		\begin{equation*}
		\begin{aligned}
		f(x)+\langle x^*, x \rangle &=  (\lambda +1)\langle x^*_1,x \rangle +1  \\
		&\geq  (\lambda +1) \left(-\frac{1}{2}\right)+1=\frac{1}{2}-\frac{\lambda}{2}.
		\end{aligned}
		\end{equation*}
If $\langle x^*_1,x \rangle < -\frac{1}{2}$, then 
		\begin{equation*}
		\begin{aligned}
		f(x)+\langle x^*, x \rangle &=  (\lambda -1)\langle x^*_1,x \rangle  \\
		&\geq  (\lambda -1) \left(-\frac{1}{2}\right)=\frac{1}{2}-\frac{\lambda}{2}.
		\end{aligned}
		\end{equation*}
Since $\langle x^*_1, - \frac{1}{2}e_1 \rangle = -\frac{1}{2}$, we have $f(- \frac{1}{2}e_1)+\langle x^*, - \frac{1}{2}e_1 \rangle=\frac{1}{2}-\frac{\lambda}{2}.$ Consequently, $\inf\limits_{x \in X}\, [f(x)+\langle x^*, x \rangle]=\frac{1}{2}-\frac{\lambda}{2}$. Thus, the equalities in \eqref{rep_g} imply that 
		\begin{equation*}
		g(x^*)=\begin{cases}
		-\frac{3}{2}-\frac{3}{2}\lambda &\text{ if } x^*=\lambda x^*_1+x^*_2\, \text{ with } 0 \leq \lambda \leq 1  \\
		-\infty &\text{ otherwise}. \\
		\end{cases} 
		\end{equation*}  
Using this formula for $g$, it is easy to check that $x^*_2$ is a unique solution of ${\rm (}{\mathcal{D}}{\rm )}$ with $g(x^*_2)=-\frac{3}{2}$. In Example \ref{solve_Example1_gpcp}, we have shown that ${\rm (}{\mathcal{P}}{\rm )}$ has a nonempty solution set and the optimal value is $-\frac{3}{2}$. These facts justify the assertion of Theorem \ref{Strong_dual_thm_1} for the problems ${\rm (}{\mathcal{P}}{\rm )}$ and ${\rm (}{\mathcal{D}}{\rm )}$ which we are dealing with.
\end{example}

The conclusion of Theorem~\ref{Strong_dual_thm_1} may not true in the general case, where one just assumes that $f$ is a proper generalized polyhedral convex function and $D$ is a nonempty generalized polyhedral convex set. 
\begin{example} Consider problem ${\rm (}{\mathcal{P}}{\rm )}$ in the setting and notations of Example~\ref{Ex_optim_cond}. We know that ${\rm (}{\mathcal{P}}{\rm )}$ has a unique solution $x=0$. Recall that ${\rm dom}\,f=D_1$ and $f(x)=\langle v^*, x \rangle$ for all $x \in D_1$. In addition, since $D_1$ is the orthogonal complement of $X_1$, we have
		\begin{equation}\label{Example2_rep_g_1}
		\inf\limits_{x \in X}\, [f(x)+\langle x^*, x \rangle]= \inf\limits_{x \in D_1}\, \langle v^*+x^*, x \rangle = \begin{cases}
		0 & \text{ if } x^* \in -v^* + X_1\\
		-\infty & \text{ if } x^* \notin -v^* + X_1.
		\end{cases}
		\end{equation}
Similarly, since $D$ is the orthogonal complement of $X_2$, 
		\begin{equation}\label{Example2_rep_g_2}
		\sup\limits_{x \in D}\,\langle x^*, x \rangle= \begin{cases}
		0 & \text{ if } x^* \in X_2\\
		+\infty & \text{ if } x^* \notin X_2.
		\end{cases}
		\end{equation}
Combining \eqref{Example2_rep_g_1}, \eqref{Example2_rep_g_2} with the equalities in \eqref{rep_g} yields
		\begin{equation*}
		g(x^*)= \begin{cases}
		0 & \text{ if } x^* \in (-v^* + X_1) \cap X_2\\
		-\infty & \text{ otherwise}.
		\end{cases}
		\end{equation*}
Since $(-v^* + X_1) \cap X_2 = \emptyset$ (see Example~\ref{Ex_optim_cond}), we can assert that $g(x^*)=-\infty$ for all $x^* \in X^*$. Therefore, ${\rm (}{\mathcal{D}}{\rm )}$ has no solution. Thus, it happens that  ${\rm (}{\mathcal{P}}{\rm )}$ has a solution, while ${\rm (}{\mathcal{D}}{\rm )}$  has an empty solution set.
\end{example}

The assumption of Theorem~\ref{Strong_dual_thm_1} implies that $D-{\rm dom}\,f$ is a polyhedral convex set in $X$. In particular, $D-{\rm dom}\,f$ is closed. Interestingly, in a Banach space setting, the polyhedral convexity of $D-{\rm dom}\,f$ can be replaced by its closedness -- a weaker property. 

\begin{theorem}\label{Strong_dual_thm_2} {\rm (Strong duality theorem II)} Suppose that $X$ is a Banach space and the set $D-{\rm dom}\,f$ is closed. If one of the two problems ${\rm (}{\mathcal{P}}{\rm )}$ and ${\rm (}{\mathcal{D}}{\rm )}$ has a solution, then both of them have solutions and the optimal values are equal. 
\end{theorem}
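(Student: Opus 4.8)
The plan is to mirror the two-case structure of the proof of Theorem~\ref{Strong_dual_thm_1}, but to replace its polyhedral-convexity arguments by a single structural observation valid in the Banach setting: that a \emph{closed} difference of two generalized polyhedral convex sets is again generalized polyhedral convex. First I would record this as the key lemma. Writing $D$ and $\mathrm{dom}\,f$ in the representation form guaranteed by \cite[Theorem~2.7]{Luan_Yen_2015} and subtracting (a sum of two polytopes is a polytope and a sum of two finitely generated cones is finitely generated), one obtains
\[
D-\mathrm{dom}\,f=\mathrm{conv}\,F_1+\mathrm{cone}\,F_2+V,\qquad V:=X_0^D+X_0^{f},
\]
with $F_1,F_2\subset X$ finite and $X_0^D,X_0^{f}$ the (closed) lineality subspaces of $D$ and $\mathrm{dom}\,f$. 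The only thing that can spoil this from being a genuine generalized polyhedral convex representation is the possible non-closedness of the subspace $V$. However, if $D-\mathrm{dom}\,f$ is closed, then its lineality space $\mathcal L$ is a \emph{closed} subspace containing $V$, hence $\mathcal L\supseteq\overline V$; since $\mathcal L$ is a lineality direction one has $(D-\mathrm{dom}\,f)+\overline V=D-\mathrm{dom}\,f$, and using $V+\overline V=\overline V$ this reads $D-\mathrm{dom}\,f=\mathrm{conv}\,F_1+\mathrm{cone}\,F_2+\overline V$ with $\overline V$ closed. By \cite[Theorem~2.7]{Luan_Yen_2015} this exhibits $D-\mathrm{dom}\,f$ as a generalized polyhedral convex set.

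With the lemma in hand, the case in which $(\mathcal P)$ has a solution $u$ is immediate: since $D-\mathrm{dom}\,f$ is now generalized polyhedral convex and $X$ is Banach, Optimality condition~III (Theorem~\ref{Optimality_condition_3_Banach}) applies and gives $0\in\partial f(u)+N_D(u)$ with no closure sign; choosing $u^*\in N_D(u)\cap(-\partial f(u))$ and invoking Proposition~\ref{Duality_prop} produces a solution of $(\mathcal D)$ together with the equality of optimal values. The case in which $(\mathcal D)$ has a solution $u^*$ is then reduced to the previous one exactly as in the second part of the proof of Theorem~\ref{Strong_dual_thm_1}: the closedness of $D-\mathrm{dom}\,f$ (equivalently of $\mathrm{dom}\,f-D$) is precisely what lets the strong separation theorem \cite[Theorem~3.4(b)]{Rudin_1991} force $D\cap\mathrm{dom}\,f\neq\emptyset$, for otherwise separating $0$ from $\mathrm{dom}\,f-D$ yields a direction $x^*$ along which $g(u^*+\lambda x^*)\ge g(u^*)+\lambda\varepsilon\to+\infty$, contradicting the optimality of $u^*$. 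Then $g(u^*)$ is a lower bound for $f$ on $D$ by Theorem~\ref{Weak_duality_thm}, so Theorem~\ref{FW_exist_thm} yields a solution of $(\mathcal P)$, and the first case completes the argument.

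The step I expect to be the main obstacle is the key lemma itself, namely that closedness alone suffices to upgrade $D-\mathrm{dom}\,f$ to a generalized polyhedral convex set; once this is established everything downstream is bookkeeping on prior results. The delicate point hidden inside Theorem~\ref{Optimality_condition_3_Banach} is that $\ker A+\ker B$ be closed, so that $(\ker A)^{\perp}+(\ker B)^{\perp}=(\ker A\cap\ker B)^{\perp}$ by \cite[Theorem~2.16]{Brezis_2011} and the sum $\partial f(u)+N_D(u)$, computed from \eqref{norm_cone_gpcs} and \eqref{rep_subd}, comes out closed; this closedness is supplied, through Lemma~\ref{sum_affinesub}, exactly by the generalized polyhedral convexity of $D-\mathrm{dom}\,f$. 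Thus the crux is the implication ``closed $\Rightarrow$ generalized polyhedral convex'', after which Banach-space completeness enters only through \cite[Theorem~2.16]{Brezis_2011}.
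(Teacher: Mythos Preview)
Your proposal is correct and follows essentially the same two-case architecture as the paper: invoke Optimality condition~III (Theorem~\ref{Optimality_condition_3_Banach}) and Proposition~\ref{Duality_prop} when $(\mathcal P)$ has a solution, and repeat the separation argument from the second half of Theorem~\ref{Strong_dual_thm_1} when $(\mathcal D)$ has a solution. The one substantive addition is your key lemma that closedness of $D-\mathrm{dom}\,f$ already forces it to be generalized polyhedral convex (by absorbing $\overline{V}$ into the representation via the lineality space); the paper's proof simply cites Theorem~\ref{Optimality_condition_3_Banach} under the hypothesis ``$D-\mathrm{dom}\,f$ closed'' without spelling out why that theorem---stated for $D-\mathrm{dom}\,f$ \emph{generalized polyhedral convex}---applies, so your lemma makes that invocation rigorous.
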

\begin{proof} First, suppose that ${\rm (}{\mathcal{P}}{\rm )}$ has a solution $u$. Then, by the closedness of $D-{\rm dom}\,f$ and Theorem~\ref{Optimality_condition_3_Banach}, we have $0 \in \partial f(u) + N_D(u)$. Select any $u^* \in N_D(u) \cap (-\partial f(u))$. By Proposition~\ref{Duality_prop}, $u^*$ is a solution of ${\rm (}{\mathcal{D}}{\rm )}$. Moreover, the optimal values of  ${\rm (}{\mathcal{P}}{\rm )}$ and  ${\rm (}{\mathcal{D}}{\rm )}$ are equal.

Now, suppose that ${\rm (}{\mathcal{D}}{\rm )}$ has a solution $u^*$. Arguing similarly as in the proof of Theorem~\ref{Strong_dual_thm_1} (the closedness of $D-{\rm dom}\,f$ allows us to apply the strong separation theorem), we can prove that ${\rm (}{\mathcal{P}}{\rm )}$ has a solution and the optimal values of ${\rm (}{\mathcal{P}}{\rm )}$ and ${\rm (}{\mathcal{D}}{\rm )}$ are equal. $\hfill\Box$ 
\end{proof}

In optimization theory, a strong duality theorem can be formulated as a combined statement about the solution existence of the primal and dual problems when they have feasible points where the objective functions are finite, and the equality of the optimal values. In that spirit, for generalized polyhedral convex optimization problems we have the next result.

\begin{theorem}\label{Strong_dual_thm_3} {\rm (Strong duality theorem III)} Suppose that the problems ${\rm (}{\mathcal{P}}{\rm )}$ and ${\rm (}{\mathcal{D}}{\rm )}$ have feasible points, at which the values of the object functions are finite. Then both problems have solutions. In addition, if either $f$ or $D$ is polyhedral convex, then there is no duality gap between the problems.
\end{theorem}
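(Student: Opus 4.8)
The plan is to combine the weak duality theorem with the Frank--Wolfe-type existence theorem, exploiting the fact that the dual problem $(\mathcal{D})$ shares the generalized polyhedral convex structure of the primal. First I would translate the two feasibility hypotheses: the existence of a primal-feasible point with finite objective means $D \cap {\rm dom}\,f \neq \emptyset$, whereas the existence of a dual-feasible point with finite objective means there is some $u^* \in X^*$ with $g(u^*) \in \mathbb{R}$, equivalently $(-{\rm dom}\,f^*) \cap {\rm dom}\,\delta^*(\cdot, D) \neq \emptyset$; the latter is precisely the condition under which $-g$ is a proper generalized polyhedral convex function.

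To obtain a solution of $(\mathcal{P})$, fix a dual-feasible $u^*$ and put $\gamma := g(u^*) \in \mathbb{R}$. By the weak duality theorem (Theorem~\ref{Weak_duality_thm}) one has $f(x) \geq \gamma$ for every $x \in D$, and since $D \cap {\rm dom}\,f \neq \emptyset$, Theorem~\ref{FW_exist_thm} produces a solution of $(\mathcal{P})$. The existence of a solution of $(\mathcal{D})$ follows by a symmetric argument: the dual is the minimization of the proper generalized polyhedral convex function $-g$ over the whole space $X^*$, which is itself a generalized polyhedral convex optimization problem. Fixing a primal-feasible $u \in D \cap {\rm dom}\,f$, weak duality gives $g(x^*) \leq f(u)$ for all $x^* \in X^*$, so $-g$ is bounded below by the real number $-f(u)$; as dual feasibility provides ${\rm dom}(-g) \neq \emptyset$, applying Theorem~\ref{FW_exist_thm} to this problem yields a solution of $(\mathcal{D})$. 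Hence both problems have solutions.

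For the final claim, assume in addition that $f$ or $D$ is polyhedral convex, and let $u$ be the solution of $(\mathcal{P})$ just obtained. The optimality condition of type II (Theorem~\ref{Optimality_condition_2_pc}), which is available precisely under this polyhedral convexity hypothesis, gives $0 \in \partial f(u) + N_D(u)$, so one may select $u^* \in N_D(u) \cap (-\partial f(u))$. Proposition~\ref{Duality_prop} then shows $g(u^*) = f(u)$, whence the optimal values of $(\mathcal{P})$ and $(\mathcal{D})$ coincide and there is no duality gap.

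The step I expect to require the most care is the symmetric solution-existence argument for $(\mathcal{D})$: one must recognize that $(\mathcal{D})$ genuinely fits the abstract framework of Section~3 (with the lcHtvs $X^*$ in place of $X$ and $-g$ in place of $f$), and that it is the dual feasibility hypothesis -- not merely convexity -- that secures the \emph{properness} of $-g$ needed to invoke Theorem~\ref{FW_exist_thm}.
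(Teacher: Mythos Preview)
Your proposal is correct and follows essentially the same approach as the paper: weak duality plus the Frank--Wolfe-type existence theorem yield solutions of both problems, and the polyhedral convexity hypothesis closes the duality gap. The only cosmetic difference is in the final step: the paper simply cites Theorem~\ref{Strong_dual_thm_1} (Strong duality theorem~I), whereas you unpack that citation by invoking Theorem~\ref{Optimality_condition_2_pc} and Proposition~\ref{Duality_prop} directly---which is exactly what the proof of Theorem~\ref{Strong_dual_thm_1} does in the relevant direction.
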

\begin{proof} Let $u \in D$ and $u^* \in X^*$ be such that $f(u)$ and $g(u^*)$ are finite. By Theorem~\ref{Weak_duality_thm}, we have $f(x) \geq g(u^*)$ for every $x \in D$. Thus, $f$ is bounded from below on $D$ and $D \cap {\rm dom}\,f \neq \emptyset$. Therefore, ${\rm (}{\mathcal{P}}{\rm )}$ has a solution by Theorem \ref{FW_exist_thm}. To show that ${\rm (}{\mathcal{D}}{\rm )}$  possesses a solution, we first observe by Theorem~\ref{Weak_duality_thm} that $-g(x^*) \geq - f(u)$ for all $x^* \in X^*$. Hence, the proper generalized polyhedral convex function $(-g)$ is bounded from below  on $X^*$ by the finite value $(-f(u))$. Consequently, by Theorem~\ref{FW_exist_thm}, the problem $\min\,\{-g(x^*) \mid x^* \in X^*\}$ has a solution. Since ${\rm (}{\mathcal{D}}{\rm )}$ is equivalent to the latter, the solution set of ${\rm (}{\mathcal{D}}{\rm )}$ is nonempty.   

Now, if either $f$ or $D$ is polyhedral convex, then by using Theorem~\ref{Strong_dual_thm_1} we can assert that the optimal values of ${\rm (}{\mathcal{P}}{\rm )}$ and ${\rm (}{\mathcal{D}}{\rm )}$ are equal. $\hfill\Box$
\end{proof}	

Concerning Theorem~\ref{Strong_dual_thm_3}, the following question seems to be interesting: \textit{Whether the conclusion ``there is no duality gap between two problems'' is still true, if one drops the assumption ``either $f$ or $D$ is polyhedral convex''?} Our attempts in constructing a counterexample have not achieved the goal, so far.

\begin{acknowledgements}
The authors would like to thank Professor Nguyen Dong Yen for valuable discussions on the subject.

\end{acknowledgements}

\end{document}